\newtheorem{theorem}{Theorem}[section]
\newtheorem{lemma}[theorem]{Lemma}
\newtheorem{proposition}[theorem]{Proposition}
\newtheorem{corollary}[theorem]{Corollary}
\theoremstyle{definition}
\newtheorem{definition}[theorem]{Definition}
\theoremstyle{remark}
\numberwithin{equation}{section}
\newcommand{\blankbox}[2]
\begin{document}
\setlength{\baselineskip}{1.2\baselineskip}
\title  [Neumann problem  for  elliptic equations]
{ The Monge-Amp\`ere equation for strictly
 $(n-1)$-convex functions with Neumann condition}
\author{Bin Deng}
\address{Department of Mathematics\\
         University of Science and Technology of China\\
         Hefei, 230026, Anhui Province, China.}
\email{bingomat@mail.ustc.edu.cn}

\thanks{$*$ Research supported by NSFC No.11721101 and No.11871255.
I would like to thank professor Xi-Nan Ma,
my advisor, for his constant encouragement and guidance.}
\begin{abstract}
  A $C^2$ function on $\mathbb{R}^n$ is called strictly $(n-1)$-convex if the sum of any
  $n-1$ eigenvalues of its Hessian is positive. In this paper, we establish a global $C^2$ estimates to the Monge-Amp\`ere equation for strictly
 $(n-1)$-convex functions with Neumann condition. By the method of continuity, we prove an existence theorem for strictly $(n-1)$-convex
  solutions of the Neumann problems.
\end{abstract}
\keywords{Neumann problem, $(n-1)$-convex, elliptic equation}

\maketitle

%\begin{IEEEkeywords}
%component; formatting; style; styling;
%
%\end{IEEEkeywords}

% For peer review papers, you can put extra information on the cover
% page as needed:
% \ifCLASSOPTIONpeerreview
% \begin{center} \bfseries EDICS Category: 3-BBND \end{center}
% \fi
%
% For peerreview papers, this IEEEtran command inserts a page break and
% creates the second title. It will be ignored for other modes.

\section{Introduction}
% no \IEEEPARstart
Let $\Omega\subset\mathbb{R}^n$ be a bounded convex domain and $\nu(x)$ be the outer unit
normal at $x\in\partial\Omega$.
Suppose $f\in C^2(\Omega)$ is positive and $\phi\in C^3(\overline{\Omega})$.
In this paper, we mainly consider  the following equations of Monge-Amp\'ere type with
Neumann condition,
\begin{equation}\label{eq}
\left\{
\begin{aligned}
  &\det(W)=f(x),\quad\text{in}\quad \Omega,\\
  &\frac{\partial u}{\partial \nu}=-u+\phi(x),\quad\text{in}\quad \partial\Omega.
\end{aligned}
\right.
\end{equation}
where the matrix $W=(w_{\alpha_{1}\cdot\cdot\cdot\alpha_{m},\beta_{1}\cdot\cdot\cdot\beta_{m}})_{n\times n}$, for $m= n-1$, with the elements as follows,
\begin{eqnarray}\label{w0}
w_{\alpha_{1}\cdot\cdot\cdot\alpha_{m},\beta_{1}\cdot\cdot\cdot\beta_{m}}
=\sum^{m}_{i=1}\sum^{n}_{j=1}u_{\alpha_{i} j
}\delta^{\alpha_{1}\cdot\cdot\cdot\alpha_{i-1}j\alpha_{i+1}
\cdot\cdot\cdot\alpha_{m}}_{\beta_{1}\cdot\cdot\cdot
\beta_{i-1}\beta_{i}\beta_{i+1}\cdot\cdot\cdot\beta_{m}},
\end{eqnarray}
a linear combination of $u_{ij}$, where $u_{ij}=\frac{\partial^2u}{\partial x_i\partial x_j}$ and $\delta^{\alpha_{1}\cdot\cdot\cdot\alpha_{i-1}\gamma\alpha_{i+1}\cdot\cdot\cdot\alpha_{m}}_{\beta_{1}\cdot\cdot\cdot
\beta_{i-1}\beta_{i}\beta_{i+1}\cdot\cdot\cdot\beta_{m}}$ is the generalized Kronecker symbol. All indexes $i,j,\alpha_i,\beta_i,\cdots$ come from $1$ to $n$.

For general $1\leq m\leq n-1$, the matrix $W\in\mathbb{R}^{C_n^m\times C_n^m}$,
$C_n^m=\frac{n!}{m!(n-m)!}$,
 comes from the following operator $U^{[m]}$ as in \cite{cns2} and \cite{hmw1}.
 First, note
that $(u_{ij})_{n\times n}$ induces an operator $U$ on $\mathbb{R}^n$ by
\begin{eqnarray}
  U(e_{i})=\sum_{j=1}^{n}u_{i j}e_{j},\quad \forall 1\leq i\leq n,\nonumber
\end{eqnarray}
where $\{e_1,e_2,\cdots,e_n\}$ is the standard basis of $\mathbb{R}^n$.  We further extend $U$ to acting on the real
 vector space $\wedge^m\mathbb{R}^n$ by
\begin{eqnarray}
  U^{[m]}(e_{\alpha_1}\wedge\cdots\wedge e_{\alpha_m})=
  \sum_{i=1}^{m}e_{\alpha_1}\wedge\cdots\wedge U(e_{\alpha_i})\wedge\cdots\wedge e_{\alpha_m},\nonumber
\end{eqnarray}
where $\{e_{\alpha_1}\wedge\cdots\wedge e_{\alpha_m}\ | \ 1\leq \alpha_1<\cdots<\alpha_m\leq n\}$ is the standard basis
for $\wedge^m\mathbb{R}^n$. Then $W$ is the matrix of $U^{[m]}$ under this standard basis.
It is convenient to denote the multi-index by $\overline{\alpha}=(\alpha_1\cdots\alpha_m)$. We only consider the admissible multi-index, that is, $1\leq\alpha_1<\alpha_2,\cdots<\alpha_m\leq n$. By the dictionary arrangement, we can arrange all admissible multi-indexes from $1$ to $C_n^m$, and use $N_{\overline{\alpha}}$ denote the order number of the multi-index $\overline{\alpha}=(\alpha_1\cdots\alpha_n)$, i.e., $N_{\overline{\alpha}}=1$ for $\overline{\alpha}=(12\cdots m)$, $\cdots$. We also use $\overline{\alpha}$ denote the index set $\{\alpha_1,\cdots,\alpha_n\}$.
It is not hard to see that
\begin{eqnarray}
W_{N_{\overline{\alpha}}N_{\overline{\alpha}}}= w_{\overline{\alpha},\overline{\alpha}}=\sum_{i=1}^{m}u_{\alpha_i\alpha_i}\label{w1}
\end{eqnarray}
and
\begin{eqnarray}
W_{N_{\overline{\alpha}}N_{\overline{\beta}}}= w_{\overline{\alpha}\overline{\beta}}=(-1)^{|i-j|}u_{\alpha_i\beta_j},\label{w2}
\end{eqnarray}
if the index set $\{\alpha_{1},\cdot\cdot\cdot,\alpha_{m}\}\setminus\{\alpha_i\}$ equals to the index set $\{\beta_{1},\cdot\cdot\cdot,\beta_{m}\}\setminus\{\beta_j\}$ but $\alpha_i\neq \beta_j$
; and also
\begin{eqnarray}
 W_{N_{\overline{\alpha}}N_{\overline{\beta}}}= w_{\overline{\alpha}\overline{\beta}}=0,\label{w3}
\end{eqnarray}
if the index sets $\{\alpha_{1},\cdot\cdot\cdot,\alpha_{m}\}$ and $\{\beta_{1},\cdot\cdot\cdot,\beta_{m}\}$
are differed by more than one elements. Specifically, for $n=3, m=2$, we have
\begin{eqnarray*}
W=
\left(
  \begin{array}{ccc}
    u_{11}+u_{22} & u_{23} & -u_{13} \\
    u_{32} & u_{11}+u_{33} & u_{12} \\
    -u_{31} & u_{21}& u_{22}+u_{33}\\
  \end{array}
\right)
\end{eqnarray*}

It follows that $W$ is symmetric and is diagonal if $(u_{ij})_{n\times n}$ is diagonal.
The eigenvalues of  $W$ are the sums of eigenvalues of $(u_{ij})_{n\times n}$.
Denoted by $\mu(D^2u)=(\mu_1,\cdots,\mu_n)$ the eigenvalues of the
Hessian  and by $\lambda(W)=(\lambda_{1}, \lambda_{2}, \cdots, \lambda_{C_{n}^{m}})$  the eigenvalues of $W$.
Generally, for any $k=1, 2, \cdots, C_{n}^{m}$, we define the $k^{th}$ elementary symmetry
function by
\begin{eqnarray}
  S_{k}(W)=S_{k}\big(\lambda(W)\big)=\sum_{1\leq i_{1}<i_{2}<\cdots<i_{k}\leq C_{n}^{m}}\lambda_{i_{1}}\lambda_{i_{2}}\cdots\lambda_{i_{k}},\nonumber
\end{eqnarray}
 We also set $S_{0}=1$. In particular, we have
 \begin{eqnarray}
   \det(W)=S_n(W)&=&\lambda_1\lambda_2\cdots\lambda_{C_n^m}\nonumber\\
   &=&\prod_{1\leq i_{1}<i_{2}<\cdots<i_{m}\leq n}(\mu_{i_1}+\mu_{i_2}+\cdots+\mu_{i_m}).\nonumber
 \end{eqnarray}
 If $m=1$, the equation (\ref{eq}) is known as Monge-Amp\'ere equation.

Define the Garding's cone in $\mathbb{R}^n$ as
\begin{eqnarray}
\Gamma_{k} = \{\mu\in\mathbb{R}^{n}|\  S_{i}(\mu) > 0, \forall 1 \leq i \leq k\}.\nonumber
\end{eqnarray}
Then we define the generalized Garding's cone as, $1\leq m\leq n$, $1\leq k\leq C_n^m$,
\begin{eqnarray}
  \Gamma_k^{(m)}=\{\mu\in\mathbb{R}^n|\ \{\mu_{i_1}+\cdots+\mu_{i_m}|\ 1\leq i_1<\cdots<i_m\leq n\}\in\Gamma_k \ \text{in}\  \mathbb{R}^{C_n^m}\}.\nonumber
\end{eqnarray}
Obviously, $\Gamma_k=\Gamma_k^{(1)}$ and $\Gamma_n\subset\Gamma_k^{(m)}\subset\Gamma_1$.
 Normally, we say a $C^2$ function $u$ is convex if any eigenvalue of the Hessian
 is nonnegative, equivalently $\mu(D^2u)\in\overline{\Gamma_n}$.
Similarly, we give the following definition of $m$-convexity.
 \begin{definition}\label{def1}
   We say a $C^2$ function $u$ is strictly $m$-convex if $\mu(D^2u)\in \Gamma_{C_n^m}^{(m)}$, i.e.,
   the sum of any $m$ eigenvalues of the Hessian is positive.
   Furthermore, we say $u$ is $m$-convex if $\mu(D^2u)\in
   \overline{\Gamma_{C_n^m}^{(m)}}$, i.e., the sum of any $m$ eigenvalues of the Hessian
   is nonnegative.
 \end{definition}
 In particular, if  $\mu(D^2u)\in\Gamma_{n}^{(n-1)}$ for any $x\in\Omega$,
 then equivalently $\lambda(W)\in\Gamma_{n}$, such that the equation (\ref{eq}) is elliptic (see \cite{cns2} or \cite{l2}). In addition,
 we say $u$ is a strictly $(n-1)$-convex
 solution if $u$ is a solution of (\ref{eq}).

%If $m = 1$, (\ref{eq}) is known as the k-Hessian equation. In particular, (\ref{eq}) is the Laplace
%equation if $k = 1$, and the Monge-Amp\`ere equation if $k = n$, $m = 1$.

For the Dirichlet problem in $\mathbb{R}^{n}$, many results is known. For example, the Dirichlet
problem of Laplace equation is studied in \cite{gt}, Caffarelli-Nirenberg-Spruck \cite{cns} and Ivochkina
\cite{ivo} solved the Dirichlet problem of Monge-Amp\`ere equation, and Caffarelli-Nirenberg-Spruck \cite{cns2} solved the Dirichlet problem of general Hessian equations even including the case considered here. For the general Hessian
quotient equation, the Dirichlet problem is solved by Trudinger in \cite{tru}. Finally, Guan \cite{guanbo} treated the Dirichlet
problem for general fully nonlinear elliptic equation on the Riemannian manifolds without any geometric restrictions to the boundary.

Also, the Neumann or oblique derivative problem of partial differential equations was
widely studied. For a priori estimates and the existence theorem of Laplace equation
with Neumann boundary condition, we refer to the book \cite{gt}. Also, we can see the
book written by Lieberman \cite{l} for the Neumann or oblique derivative problem of linear
and quasilinear elliptic equations. In 1987, Lions-Trudinger-Urbas solved the Neumann
problem of Monge-Amp\`ere equation in the celebrated paper \cite{ltu}. For the the Neumann
problem of k-Hessian equations, Trudinger \cite{tru2} established the existence theorem when
the domain is a ball, and he conjectured (in \cite{tru2}, page 305) that one can solve the problem
in sufficiently smooth uniformly convex domains. Recently, Ma and Qiu \cite{mq} gave a positive
answer to this problem and solved the the Neumann problem of k-Hessian equations in
uniformly convex domains. After their work, the research on the Neumann problem of other equatios
has made progresses(see \cite{mx} \cite{cz} \cite{cmw} \cite{w}).

%For general $m$, the $W$-matrices are quite related to the ``$m$-convexity" or ``$m$-positivity" in
%differential geometry and partial differential equations. We say a $C^2$ function $u$ is $m$-convex if
%the sum of any $m$ eigenvalues of its Hessian is nonnegative, equivalently, $\mu(D^2u)\in\overline{\Gamma^{(m)}_{C_n^m}}$
%or $\lambda(W)\in\overline{\Gamma_{C_n^m}}$.
Similarly to $m$-convexity for the Hessian (see Definition \ref{def1}),
we can formulate the notion of $m$-convexity for curvature operator and second fundamental
forms of hypersurfaces. There are large amount literature in differential geometry on this
subject. For example, Sha \cite{sh} and Wu \cite{wu} introduced the $m$-convexity of the sectional
curvature of Riemannian manifolds and studied the topology for these manifolds. In a series interesting papers, Harvey and Lawson
\cite{hl1} \cite{hl2} \cite{hl3}  introduce some generally convexity on the solutions of the nonlinear elliptic Dirichlet
problem, $m$-convexity is a special case. Han-Ma-Wu \cite{hmw1} obtained  an existence theorem of $m$-convex starshaped hypersurface
with prescribed mean curvature.
 More recently, in the complex space $\mathbb{C}^n$ case, Tosatti and Weinkove\cite{tw} \cite{tw2} solved the Monge-Amp\`ere equation for
 $(n-1)$-plurisubharmonic functions on a compact K\"ahler manifold, where the $(n-1)$-plurisubharmonicity means
the sum of any $n-1$ eigenvalues of the complex Hessian  is nonnegative.

From the above geometry and analysis reasons, it is naturally to study the Neumann problem (\ref{eq}).

This paper is a sequel to \cite{Deng}. In \cite{Deng}, the author considered the following Neumann problem for general fully nonlinear equations
\begin{equation}\label{eq1}
  \left\{
  \begin{aligned}
  &S_{k}(W)=f(x),\quad\text{in}\ \Omega,\\
  &\frac{\partial u}{\partial\nu}=-u+\phi(x),\quad\text{on}\ \partial\Omega.
  \end{aligned}
  \right.
\end{equation}
The equation (\ref{eq}) is a special case of (\ref{eq1}) when $m=n-1, k=n$. Parallel to Definition \ref{def1}, we give
\begin{definition}
  We say $u$ is  $k$-admissible  if $\mu(D^2u)\in \Gamma_k^{(m)}$.
  Particularly, if $k=C_n^m$, $u$ is strictly $m$-convex.
\end{definition}
For $k\leq C_{n-1}^{m-1}=\frac{m}{n}C_n^m$, we obtained an existence theorem of the $k$-admissible solution with less geometric
restrictions to the boundary. For $m<\frac{n}{2}$ and $ k=C_{n-1}^{m-1}+k_0\leq \frac{n-m}{n}C_n^m$, we got an existence theorem
if $\Omega$ is strictly $(m,k_0)$-convex, i.e., $\kappa\in\Gamma_{k_0}^{(m)}$, where
$\kappa = (\kappa_1 ,\cdots ,\kappa_{n-1} )$ denote the principal curvatures
of $\partial\Omega$ with respect to its inner normal $-\nu$. We didn't prove the existence
for strictly $m$-convex solution for the equation (\ref{eq1}) in \cite{Deng}.
Particularly, for $m=n-1$ (maybe the most interesting case except the case $m=1$),
we got the existence of the $k$-admissible solution for $k\leq n-1$
only except that of the $(n-1)$-convex solution for $k=n$. In this paper, given a strong
geometric restriction to the boundary,
we can prove the existence of strictly $(n-1)$-convex
solution to the Neumann problem (\ref{eq}).

%The methods of Ma and Qiu \cite{mq} for the problem with $m=1$ can be generalized to our case. The key ingredient
%in the present paper is to understand the structure of $W$, precisely, to replace the eigenvalues of $D^2u$ by the sums of them.
%For $k\leq C_{n-1}^{m-1}=\frac{m}{n}C_n^m$, we obtain an existence theorem of the $k$-admissible solution without any geometric
%restrictions to the boundary. For $m<\frac{n}{2}$ and $ k=C_{n-1}^{m-1}+k_0\leq \frac{n-m}{n}C_n^m$, we can obtain an existence theorem
%if $\Omega$ is strictly $(m,k_0)$-convex (see Remark  \ref{re1}). It seems that as the degree of nonlinearity of the equation (\ref{eq})
%increases, i.e., $k$ becomes larger,
%the problem becomes more difficult to solve. Particularly, for $m=n-1$, we get the existence of the $k$-admissible solution for $k\leq n-1$
%only except that of the $(n-1)$-convex solution for $k=n$. The author will continue to study this case.

In this paper, we always denote $\kappa=(\kappa_1,\cdots,\kappa_{n-1})$
the principal curvature and $H=\sum\limits_{i=1}^{n-1}\kappa_i$ the
mean curvature of the boundary.
We now state the main results of this paper as follows.
\begin{theorem}\label{th1.1}
   Suppose $\Omega\subset\mathbb{R}^{n}\ (n\geq3)$ is a bounded
   strictly convex domain  with $C^{4}$ boundary. Denote $\kappa_{max}(x)
 $ ($\kappa_{min}(x)$) the maximum (minimum) principal curvature at $x\in\partial\Omega$ such that $\kappa_{max}-\kappa_{min}<\frac{H}{2(n-1)(n-2)}$. Let $f \in C^{2}(\Omega)$ is a positive function and $\phi\in C^{3}(\overline{\Omega})$. Then there exists a unique strictly $(n-1)$-convex solution $u \in C^{3,\alpha}(\overline{\Omega})$ of the Neumann problem (\ref{eq}).
\end{theorem}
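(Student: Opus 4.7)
The natural approach is the method of continuity. I would connect the target equation to a trivial one (say, take $f_t=(1-t)f_0+tf$ and $\phi_t=(1-t)\phi_0+t\phi$ with $f_0,\phi_0$ chosen so that $u_0\equiv c$ is a solution of the initial problem) and consider the set $T\subset[0,1]$ of parameters for which a strictly $(n-1)$-convex solution $u_t\in C^{3,\alpha}(\overline{\Omega})$ exists. Openness of $T$ is standard: the linearization of $\log\det W$ is uniformly elliptic on the cone $\{\lambda(W)\in\Gamma_n\}$, and together with the oblique Neumann condition $\partial u/\partial\nu+u=\phi$ one gets an invertible Fredholm problem on $C^{2,\alpha}$, so the implicit function theorem applies. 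The whole work therefore reduces to establishing closedness, i.e.\ uniform a priori $C^{2,\alpha}(\overline{\Omega})$ bounds for any strictly $(n-1)$-convex solution along the path. Once one has uniform $C^2$ bounds, the concavity of $\log\det$ on positive symmetric matrices together with Evans--Krylov theory and Krylov's boundary regularity for oblique problems gives $C^{2,\alpha}$ automatically, so the essential task is a priori $C^2$ bounds.

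\textbf{Zeroth and first order bounds.} The $C^0$ estimate is immediate from the boundary condition together with the maximum principle: at an interior extremum $\det W=f>0$ together with $\lambda(W)\in\Gamma_n$ forces a sign on the eigenvalues; at a boundary extremum Hopf's lemma applied to $\partial u/\partial\nu+u=\phi$ yields $\|u\|_{C^0}\le\|\phi\|_{C^0}$. For the gradient bound I would differentiate the equation in a tangential direction and apply the maximum principle to an auxiliary function of the type $|Du|^2\exp(\psi(u))$, exactly as in the Lions--Trudinger--Urbas paper \cite{ltu} and as already carried out for the $k$-admissible case in \cite{Deng}; the key point is that $W$ is linear in $D^2u$, so the linearization acting on first derivatives has the same structural form as in the $k$-Hessian setting and the arguments go through unchanged.

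\textbf{Second order estimates --- the main obstacle.} This is the heart of the paper and the place where the geometric assumption $\kappa_{\max}-\kappa_{\min}<H/(2(n-1)(n-2))$ enters. I would first reduce the global $C^2$ estimate to a boundary one: the standard auxiliary function $\max_{\xi\in\mathbb{S}^{n-1}} w_{\xi\xi\cdots\xi\xi\cdots}$, modified by a term involving $u$ and $|Du|^2$, satisfies a differential inequality whose maximum on $\overline{\Omega}$ is attained on $\partial\Omega$; concavity of $\log\det$ on $\Gamma_n$-cone matrices drives this. On the boundary, at a point $x_0\in\partial\Omega$ choose a frame $e_1,\dots,e_{n-1}$ of principal directions of $\partial\Omega$ and $e_n=\nu$. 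Pure tangential components $u_{\tau\tau}$ are bounded by twice-differentiating the Neumann condition tangentially, invoking strict convexity of $\Omega$ and the $C^1$ bound. Mixed components $u_{\tau\nu}$ are controlled by the barrier construction of Ma--Qiu \cite{mq}, adapted in \cite{Deng}. The double normal component $u_{\nu\nu}$ is the delicate one: using the Neumann condition to relate $u_{\nu\nu}$ to tangential data and the equation $\det W=f$ to produce a lower bound, one is led to a quadratic form in the tangential Hessian whose coefficients involve the principal curvatures $\kappa_i$. Because $W$'s off-diagonal entries mix the anisotropy of $D^2u$ with that of $\partial\Omega$, an extra ``bad'' symmetric bilinear form $\sum_{i\neq j}(\kappa_i-\kappa_j)(\cdots)$ appears, and the hypothesis $\kappa_{\max}-\kappa_{\min}<H/(2(n-1)(n-2))$ is exactly what is needed to absorb it into the positive principal part (which is essentially $H\cdot(\text{something positive})/(2(n-1)(n-2))$) and thereby obtain $u_{\nu\nu}\le C$.

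\textbf{Conclusion and uniqueness.} Along the continuity path, strict $(n-1)$-convexity persists: the uniform $C^2$ bound together with $\det W=f\ge\min f>0$ keeps $\lambda(W)$ in a fixed compact subset of $\Gamma_n$, in particular away from $\partial\Gamma_n$, preserving ellipticity. Evans--Krylov then upgrades $C^2$ to interior $C^{2,\alpha}$, and Krylov's boundary theorem for oblique problems extends this up to $\partial\Omega$. Standard linear Schauder theory on the linearized problem closes the continuity argument, producing the desired $C^{3,\alpha}$ solution at $t=1$. Uniqueness is a direct application of the maximum principle: if $u_1,u_2$ are two strictly $(n-1)$-convex solutions then $v=u_1-u_2$ satisfies a linear elliptic equation with the boundary condition $\partial v/\partial\nu=-v$, and Hopf's lemma applied at a nonzero extremum of $v$ yields a contradiction. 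The main obstacle throughout is exclusively the double normal estimate, where the sharp form of the boundary-geometry hypothesis on $\kappa_{\max}-\kappa_{\min}$ is indispensable.
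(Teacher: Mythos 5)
Your overall skeleton (continuity method, uniqueness by Hopf, reduce everything to global $C^2$ bounds, and reduce those to the boundary double-normal derivative) matches the paper. But the allocation and mechanism of the second-order boundary estimates are off, and the core of the argument — how the pinching hypothesis on $\kappa_{\max}-\kappa_{\min}$ is actually used — is misdescribed, so the central step is not really supplied.

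In the paper, \emph{both} tangential $u_{\tau\tau}$ and mixed $u_{\tau\nu}$ components are controlled in a single lemma (Lemma~\ref{le4.1}) by the maximum principle applied to $v(x,\xi)=u_{\xi\xi}-v'(x,\xi)+K_1|x|^2+K_2|Du|^2$ over $\overline{\Omega}\times\mathbb{S}^{n-1}$: at the boundary max one either has $\xi_0$ tangential (Hopf lemma gives $u_{\xi_0\xi_0}\leq C(1+|u_{\nu\nu}|)$) or $\xi_0$ oblique (a convexity-in-$\xi$ argument gives $u_{\xi_0\xi_0}\leq |u_{\nu\nu}|+C$). This reduces $\sup|D^2u|$ to $C_0(1+N)$ with $N=\sup_{\partial\Omega}|u_{\nu\nu}|$. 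You assign a direct ``twice-differentiate the Neumann condition'' argument to $u_{\tau\tau}$ and the Ma--Qiu barrier to $u_{\tau\nu}$; neither is what is done, and in particular the Ma--Qiu-style barrier is reserved for $u_{\nu\nu}$ itself.

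The genuine gap is your description of the double-normal estimate. You suggest a boundary-point computation using the Neumann condition and the equation to produce a ``quadratic form in the tangential Hessian'' with a bad term $\sum_{i\neq j}(\kappa_i-\kappa_j)(\cdots)$ that is absorbed by the curvature pinching. This is not what the paper does, and as stated it is not a proof — you have no mechanism for controlling $u_{\nu\nu}$ from the equation alone at a boundary point for this fully nonlinear equation. The paper instead builds the Ma--Qiu barrier $P(x)=g(x)(Du\cdot\nu-\phi)-(A+\sigma N)h(x)$ on the strip $\Omega_\mu$, with $g=1-\beta h$ and $h=-d+K_3 d^2$, and proves $F^{ij}P_{ij}<0$ at an interior minimum by a case analysis on the sign pattern of the (diagonalized) $u_{ii}$. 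The essential structural input is Lemma~\ref{le4.2}: the strict $(n-1)$-convexity of $D^2h$ near $\partial\Omega$, plus concavity of $\det(W)^{1/n}$, yields $F^{ij}h_{ij}\geq\gamma\kappa_0(1+\mathcal{F})$. The pinching hypothesis $\kappa_{\max}-\kappa_{\min}<\gamma H/(2(n-1)(n-2))$ enters only in the case where the diagonal Hessian has mixed signs: there one must bound
\begin{equation}
2\kappa_{\max}\sum_{u_{ii}\geq 0}F^{ii}u_{ii}+(2-\epsilon)\kappa_{\min}\sum_{u_{ii}<0}F^{ii}u_{ii}
\end{equation}
and the resulting bad term is $2(n-2)[\kappa_{\max}-(1-\epsilon)\kappa_{\min}]\sigma N\mathcal{F}$, which is absorbed by $(A+\sigma N)\gamma\kappa_0\mathcal{F}$ \emph{precisely} when $2(n-1)(n-2)(\kappa_{\max}-\kappa_{\min})<(n-1)\gamma\kappa_0=\gamma H$. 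So the condition does not arise from an off-diagonal bilinear form in $\kappa_i-\kappa_j$; it arises from the anisotropy of the coefficients $D_i\nu^i$ multiplying $F^{ii}u_{ii}$ of different signs in the barrier computation. Without this barrier argument your proposal has no working double-normal estimate, which you yourself correctly identify as the crux.
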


When the dimension $n$ is large, we require the domain $\Omega$ is almost a ball.
As a special case, for $n=3$, $H=\kappa_{max}+\kappa_{min}$, we have
\begin{corollary}\label{co1.2}
   Suppose $\Omega\subset\mathbb{R}^{3}$ is a bounded
   strictly convex domain  with $C^{4}$ boundary. Denote $\kappa_{max}(x)
 $ ($\kappa_{min}(x)$) the maximum (minimum) principal curvature at $x\in\partial\Omega$ such that $\kappa_{max}<\frac{5}{3}\kappa_{min}$. Let $f \in C^{2}(\Omega)$ is a positive function and $\phi\in C^{3}(\overline{\Omega})$. Then there exists a unique strictly $2$-convex solution $u \in C^{3,\alpha}(\overline{\Omega})$ of the Neumann problem (\ref{eq}).
\end{corollary}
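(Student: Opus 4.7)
The plan is simply to specialize Theorem \ref{th1.1} to the dimension $n=3$; the work amounts to checking that the hypothesis $\kappa_{max}<\tfrac{5}{3}\kappa_{min}$ in Corollary \ref{co1.2} is equivalent to the general curvature pinching $\kappa_{max}-\kappa_{min}<\frac{H}{2(n-1)(n-2)}$ of Theorem \ref{th1.1} in this case, so that all the estimates (interior and boundary $C^0$, $C^1$, $C^2$) and the continuity argument from the main theorem can be invoked verbatim.

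Concretely, I would first substitute $n=3$ into the quantity $\frac{H}{2(n-1)(n-2)}$ to obtain $\frac{H}{4}$. Next, since $\partial\Omega$ is a surface in $\mathbb{R}^3$ it has only two principal curvatures $\kappa_1,\kappa_2$, so $H=\kappa_1+\kappa_2=\kappa_{max}+\kappa_{min}$ pointwise. Hence the hypothesis of Theorem \ref{th1.1} reads
\begin{equation*}
\kappa_{max}-\kappa_{min}<\frac{\kappa_{max}+\kappa_{min}}{4},
\end{equation*}
which, after clearing denominators, becomes $4\kappa_{max}-4\kappa_{min}<\kappa_{max}+\kappa_{min}$, i.e. $3\kappa_{max}<5\kappa_{min}$, or equivalently $\kappa_{max}<\tfrac{5}{3}\kappa_{min}$. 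Note that this condition already forces $\kappa_{min}>0$ at every boundary point (since otherwise the inequality would force $\kappa_{max}\leq 0$, contradicting the strict convexity of $\Omega$), so it is compatible with the strict convexity hypothesis.

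With these identifications, the hypotheses of Theorem \ref{th1.1} are satisfied for the domain $\Omega\subset\mathbb{R}^3$ of Corollary \ref{co1.2}, and the statement (existence and uniqueness of a strictly $2$-convex $C^{3,\alpha}$ solution with $m=n-1=2$) is immediate. Since the corollary contains no new analytic content beyond Theorem \ref{th1.1}, there is no real obstacle; the only thing to be careful about is making the algebraic equivalence transparent and verifying that the regularity assumptions ($C^4$ boundary, $f\in C^2$, $\phi\in C^3$) carry over unchanged, which they do.
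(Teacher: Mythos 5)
Your proof is correct and is exactly the intended specialization: substituting $n=3$ into the pinching condition of Theorem \ref{th1.1} and using $H=\kappa_{max}+\kappa_{min}$ for a surface in $\mathbb{R}^3$ reduces $\kappa_{max}-\kappa_{min}<\frac{H}{4}$ to $\kappa_{max}<\frac{5}{3}\kappa_{min}$. The paper offers no further argument for Corollary \ref{co1.2} beyond this observation, so nothing is missing.
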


The rest of this paper is arranged as follows. In section \ref{sec2}, we give some basic properties
of the elementary symmetric functions and some notations.
In section \ref{sec3},
we establish a priori $C^0$ estimates and global gradient estimates.
In section \ref{sec4}, we show the proof of the global estimates of second order derivatives.
Finally, we can prove the existence theorem by the method of continuity in section \ref{sec5}.

\section{Preliminary}\label{sec2}
In this section, we give some basic properties of elementary symmetric functions and some
notations.

First, we denote by
$S_{k}(\lambda|i)$ the symmetric function with $\lambda_{i} = 0$ and $S_{k}(\lambda|ij)$ the symmetric function with
$\lambda_{i} = \lambda_{j} = 0$.
\begin{proposition}\label{pro1}
  Let $\lambda=(\lambda_{1}, \cdots, \lambda_{n})\in \mathbb{R}^{n}$ and $k=1, \cdots, n$, then
  \begin{eqnarray}
    &&\sigma_{k}(\lambda)=\sigma_{k}(\lambda|i)+\lambda_{i}\sigma_{k-1}(\lambda|i),\quad \forall 1\leq i\leq n,\\
    &&\sum_{i=1}^{n}\lambda_{i}\sigma_{k-1}(\lambda|i)=k\sigma_{k}(\lambda),\\
    &&\sum_{i=1}^{n}\sigma_{k}(\lambda|i)=(n-k)\sigma_{k}(\lambda).
  \end{eqnarray}
\end{proposition}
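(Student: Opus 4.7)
The plan is to derive all three identities directly from the basic combinatorial definition
\[
\sigma_k(\lambda) = \sum_{1\le i_1<i_2<\cdots<i_k\le n}\lambda_{i_1}\lambda_{i_2}\cdots\lambda_{i_k},
\]
together with the analogous definitions of $\sigma_k(\lambda|i)$ (sum over $k$-subsets of $\{1,\ldots,n\}\setminus\{i\}$) and $\sigma_{k-1}(\lambda|i)$. One could equivalently differentiate the generating identity $\prod_{j=1}^n(1+t\lambda_j)=\sum_{k\ge 0}\sigma_k(\lambda)t^k$, but a direct subset count is shortest.

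For the recursion (2.1), I would fix $i$ and partition the $k$-subsets of $\{1,\ldots,n\}$ into two classes: those not containing $i$, whose contribution is precisely $\sigma_k(\lambda|i)$; and those of the form $\{i\}\cup I'$ with $I'$ a $(k-1)$-subset of $\{1,\ldots,n\}\setminus\{i\}$, whose contribution is $\lambda_i\sigma_{k-1}(\lambda|i)$. Adding the two yields the identity.

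For (2.2), the argument is a double counting: a monomial $\lambda_{j_1}\cdots\lambda_{j_k}$ of $\sigma_k(\lambda)$ is produced by the term $\lambda_i\sigma_{k-1}(\lambda|i)$ exactly when $i\in\{j_1,\ldots,j_k\}$, and then with coefficient $1$; hence summing over all $i$ reproduces each monomial exactly $k$ times. Identity (2.3) is dual in the same spirit: $\sigma_k(\lambda|i)$ collects precisely those $k$-subsets $I$ missing $i$, so as $i$ ranges over $\{1,\ldots,n\}$, a given $k$-subset $I$ is counted once for each of the $n-k$ indices in $\{1,\ldots,n\}\setminus I$. Alternatively, (2.3) follows formally by summing (2.1) over $i$ and substituting (2.2), which gives a quick sanity check.

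There is no real obstacle here; the only minor care needed is with degenerate ranges, but the standard conventions $\sigma_0\equiv 1$ and $\sigma_k(\lambda|i)=0$ whenever $k$ exceeds the number of remaining variables make every formula hold uniformly across $k=1,\ldots,n$. These identities will serve as the algebraic backbone for the Newton-MacLaurin type manipulations used later in the $C^0$, $C^1$ and $C^2$ estimates.
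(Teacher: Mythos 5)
Your argument is correct and complete. The paper itself offers no proof of Proposition~\ref{pro1}; it simply states these as well-known identities for the elementary symmetric functions (note the slight notational drift, $\sigma_k$ here versus $S_k$ elsewhere). Your combinatorial treatment is the standard one: the partition of $k$-subsets by membership of $i$ gives the recursion, and the two double-counting arguments give the summed identities, with the observation that the third also follows formally from summing the first over $i$ and invoking the second. Nothing is missing, and the remark about the conventions $\sigma_0\equiv 1$ and $\sigma_k(\lambda|i)=0$ for out-of-range $k$ correctly handles the edge cases.
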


We denote by $S_{k}(W|i)$ the symmetric function with $W$ deleting the $i$-row and
$i$-column and $S_{k}(W|ij)$ the symmetric function with $W$ deleting the $i, j$-rows and $i, j$-columns.
Then we have the following identities.

\begin{proposition}\label{pro2}
  Suppose $A=(a_{ij})_{n\times n}$ is diagonal, and $k$ is a positive integer, then
  \begin{equation}\label{s1}
    \frac{\partial S_{k}(A)}{\partial a_{ij}}=
    \left\{
    \begin{aligned}
    &S_{k-1}(A|i),&&\quad\text{if} \ i=j,&\\
    &0,&&\quad\text{if} \ i\neq j.&
    \end{aligned}
    \right.
  \end{equation}
%  and
%    \begin{equation}\label{s2}
%    \frac{\partial^2 S_{k}(A)}{\partial a_{pq}\partial a_{rs}}=
%    \left\{
%    \begin{aligned}
%    &S_{k-2}(A|pr),&&\quad\text{if} \ p=q,r=s,p\neq r&\\
%    &-S_{k-2}(A|pq),&&\quad\text{if} \ p=s,q=r, p\neq q.&\\
%    &0,&&\quad\text{otherwise}.&
%    \end{aligned}
%    \right.
%  \end{equation}
  Furthermore, suppose $W=(w_{\overline{\alpha}\overline{\beta}})_{C_n^m\times C_n^m}$ defined as in (\ref{w0}) is diagonal, then
  \begin{equation}\label{sw1}
    \frac{\partial S_{k}(W)}{\partial u_{ij}}=
    \left\{
    \begin{aligned}
    &\sum_{i\in\overline{\alpha}}S_{k-1}(W|N_{\overline{\alpha})},&&\quad\text{if} \ i=j,&\\
    &0,&&\quad\text{if} \ i\neq j.&
    \end{aligned}
    \right.
  \end{equation}
 % and
%      \begin{equation}\label{sw2}
%    \frac{\partial^2 S_{k}(W)}{\partial u_{pq}\partial u_{rs}}=
%    \left\{
%    \begin{aligned}
%    &\sum_{p\in\overline{\alpha},p\notin\overline{\beta},r\notin\overline{\alpha},r\in\overline{\beta}}S_{k-2}(W|N_{\overline{\alpha}}N_{\overline{\beta}}),&&\quad\text{if} \ p=q,r=s,p\neq r&\\
%    &-\sum_{p\in\overline{\alpha},p\notin\overline{\beta},q\notin\overline{\alpha},q\in\overline{\beta}}S_{k-2}(W|N_{\overline{\alpha}}N_{\overline{\beta}}),&&\quad\text{if} \ p=s,q=r, p\neq q.&\\
%    &0,&&\quad\text{otherwise}.&
%    \end{aligned}
%    \right.
%  \end{equation}
\end{proposition}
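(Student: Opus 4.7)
The plan is to derive both formulas from the representation $S_k(A) = \sum_{|I|=k} \det(A_{II})$, where $A_{II}$ is the principal $k \times k$ submatrix of $A$ indexed by $I \subset \{1,\ldots,n\}$, combined with cofactor expansion.

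For the first identity, I would differentiate this sum term-by-term. The variable $a_{ij}$ appears in $A_{II}$ only when $i,j \in I$, and then $\partial \det(A_{II})/\partial a_{ij}$ equals the $(i,j)$-cofactor of $A_{II}$ evaluated at $A$. When $A$ is diagonal and $i=j$, this cofactor is $\prod_{l \in I,\, l \neq i} a_{ll}$; summing over those $I$ with $|I|=k$ and $i \in I$ reproduces exactly $S_{k-1}(A|i)$. When $A$ is diagonal and $i \neq j$, the submatrix of $A_{II}$ obtained by deleting row $i$ and column $j$ contains an entire row of off-diagonal zeros (the one previously indexed by $j$), so its determinant, and therefore the cofactor, vanishes.

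For the second identity, I would apply the chain rule
\begin{equation*}
\frac{\partial S_k(W)}{\partial u_{ij}} = \sum_{\overline{\alpha},\overline{\beta}} \frac{\partial S_k(W)}{\partial w_{\overline{\alpha}\overline{\beta}}} \cdot \frac{\partial w_{\overline{\alpha}\overline{\beta}}}{\partial u_{ij}}
\end{equation*}
at a point where $(u_{ij})$ is diagonal; by (\ref{w1})--(\ref{w3}) this forces $W$ to be diagonal as well. The first part of the proposition, applied to $W$ in place of $A$, gives $\partial S_k(W)/\partial w_{\overline{\alpha}\overline{\beta}} = 0$ for $\overline{\alpha} \neq \overline{\beta}$ and $\partial S_k(W)/\partial w_{\overline{\alpha}\overline{\alpha}} = S_{k-1}(W|N_{\overline{\alpha}})$. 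Only diagonal $w$-terms survive the chain rule sum, and by (\ref{w1}), $w_{\overline{\alpha}\overline{\alpha}} = \sum_{l \in \overline{\alpha}} u_{ll}$ depends only on the diagonal entries of $u$; thus $\partial w_{\overline{\alpha}\overline{\alpha}}/\partial u_{ij}$ equals $1$ when $i=j$ and $i \in \overline{\alpha}$, and $0$ otherwise. Assembling these pieces yields the claimed formula.

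The only mildly subtle point — not really an obstacle — is that off-diagonal entries $w_{\overline{\alpha}\overline{\beta}}$ with $\overline{\alpha}\neq\overline{\beta}$ do depend on off-diagonal $u_{ij}$'s via (\ref{w2}), so one might worry these terms contribute to $\partial S_k(W)/\partial u_{ij}$ for $i\neq j$. The vanishing $\partial S_k(W)/\partial w_{\overline{\alpha}\overline{\beta}}=0$ at diagonal $W$ furnished by the first part of the proposition is precisely what kills these contributions, so the second statement is essentially a bookkeeping consequence of the first combined with the linear structure of $w_{\overline{\alpha}\overline{\alpha}}$ in the diagonal entries of $u$.
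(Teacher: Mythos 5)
Your proof is correct and takes essentially the same route as the paper: the second identity is derived from the first via the chain rule together with the structure given by (\ref{w1})--(\ref{w3}), noting that when $(u_{ij})$ is diagonal so is $W$, hence only the diagonal $w$-entries contribute. The only difference is that for (\ref{s1}) you spell out the standard sum-of-principal-minors argument, whereas the paper simply cites \cite{l2}.
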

\begin{proof}
  For (\ref{s1}), see a proof in  \cite{l2}.

  Note that
  \begin{eqnarray}
    \frac{\partial S_k(W)}{\partial u_{ij}}=\sum_{\overline{\alpha},\overline{\beta}}\frac{\partial S_k(W)}{\partial w_{\overline{\alpha}\overline{\beta}}}\frac{\partial w_{\overline{\alpha}\overline{\beta}}}{\partial u_{ij}},
  \end{eqnarray}
   Using (\ref{w1}), (\ref{w2}), and (\ref{w3}), (\ref{sw1}) is immediately a consequence of (\ref{s1}).
 % and
%  \begin{eqnarray}
%    \frac{\partial^2 S_k(W)}{\partial u_{pq}\partial u_{rs}}=\sum_{\overline{\alpha},\overline{\beta},\overline{\eta},\overline{\xi}}\frac{\partial^2 S_k(W)}{\partial w_{\overline{\alpha}\overline{\beta}}  \partial w_{\overline{\eta}\overline{\xi}}}\frac{\partial w_{\overline{\alpha}\overline{\beta}}}{\partial u_{pq}} \frac{\partial w_{\overline{\eta}\overline{\xi}}}{\partial u_{rs}},
%  \end{eqnarray}
%  since $w_{\overline{\alpha}\overline{\beta}}$ is a linear combination of $u_{ij}$. Using (\ref{w1}), (\ref{w2}), (\ref{w3}), (\ref{s1}) and (\ref{s2}), we can prove (\ref{sw1}) and (\ref{sw2}) by direct computations.
\end{proof}

  Recall that the Garding's cone is defined as
  \begin{eqnarray*}
    \Gamma_{k}=\{\lambda\in\mathbb{R}^{n} |\  S_{i}(\lambda)>0, \forall \ 1\leq i\leq k\}.
  \end{eqnarray*}
\begin{proposition}\label{pro3}
  Let $\lambda \in\Gamma_{k}$ and $k\in\{1, 2, \cdots, n\}$. Suppose that
  \begin{eqnarray*}
    \lambda_{1}\geq\cdots\geq\lambda_{k}\geq\cdots\geq\lambda_{n},
  \end{eqnarray*}
  then we have
  \begin{eqnarray}
     &&S_{k-1}(\lambda|n)\geq \cdots \geq S_{k-1}(\lambda|k) \geq \cdots \geq S_{k-1}(\lambda |1) >0,\label{2.1}\\
    % &&\lambda_1 \geq \cdots \geq \lambda_k >0, \quad S_{k-1}(\lambda|k)\geq C(n,k) S_k(\lambda),\label{2.2}\\
     &&\lambda_1 S_{k-1} (\lambda |1) \geq \frac{k}{n} S_k(\lambda),\label{2.3}\\
     &&S_{k}^{\frac{1}{k}}(\lambda) \ \text{is concave in} \ \Gamma_k.\label{2.8}
  \end{eqnarray}
  where $C^k_n=\frac{n!}{k!(n-k)!}$.
  %and $C(n,k)$ is a
%  positive constant depends only on $n$ and $k$
\end{proposition}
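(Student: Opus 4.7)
Each of the three assertions is classical in the theory of Garding cones. The plan is: prove (2.1) and (2.3) by elementary algebraic manipulations built from the identities in Proposition \ref{pro1}, and invoke Garding's theorem on hyperbolic polynomials for (2.8).

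For (2.1), the key identity is
\[
S_{k-1}(\lambda|i) - S_{k-1}(\lambda|j) = (\lambda_j - \lambda_i)\, S_{k-2}(\lambda|ij),
\]
obtained by applying the first formula of Proposition \ref{pro1} twice (once to expand $S_{k-1}(\lambda|i)$ in the variable $\lambda_j$ and symmetrically in $\lambda_i$). Taking $i<j$, the ordering of the $\lambda$'s gives $\lambda_j - \lambda_i \le 0$, so the monotonicity chain reduces to the positivity of the two-deletion quantities $S_{k-2}(\lambda|ij)$. The terminal positivity $S_{k-1}(\lambda|1)>0$ is the delicate point. Both can be handled by the following standard fact, which I would either prove by downward induction on $k$ or cite from \cite{cns2}: if $\lambda \in \Gamma_k$ and $\lambda_1 = \max_i \lambda_i$, then the truncation $(\lambda_2,\ldots,\lambda_n) \in \Gamma_{k-1}$ (viewed in $\mathbb{R}^{n-1}$). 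Iterating this gives the required sign for $S_l(\lambda|i_1\cdots i_p)$ in every regime we use.

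For (2.3), combine (2.1) with the second identity of Proposition \ref{pro1}, namely $\sum_i \lambda_i S_{k-1}(\lambda|i) = k S_k(\lambda)$. The analogous telescoping computation
\[
\lambda_i S_{k-1}(\lambda|i) - \lambda_{i+1} S_{k-1}(\lambda|i+1) = (\lambda_i - \lambda_{i+1})\, S_{k-1}(\lambda|i,i+1)
\]
is nonnegative by the ordering and by the two-deletion version of the positivity step behind (2.1). Telescoping therefore gives $\lambda_1 S_{k-1}(\lambda|1) \ge \lambda_i S_{k-1}(\lambda|i)$ for each $i$, and summing over $i$ and using the Euler-type identity yields $n\,\lambda_1 S_{k-1}(\lambda|1) \ge k S_k(\lambda)$, which is (2.3).

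For (2.8), the concavity of $S_k^{1/k}$ on $\Gamma_k$ is Garding's classical theorem on hyperbolic polynomials: $S_k$ is hyperbolic with respect to $(1,\ldots,1)$, its Garding cone coincides with $\Gamma_k$, and $S_k^{1/k}$ is concave there. I would simply cite this (see for instance \cite{cns2}) rather than reproduce the proof. The principal technical obstacle across the whole proposition is the positivity bootstrap behind (2.1); once that is in hand, the inequalities in (2.1) and (2.3) are mechanical algebra, and (2.8) is cited.
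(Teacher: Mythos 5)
The paper itself offers no proof: it simply cites \cite{l2}, \cite{hs} for (\ref{2.1}), \cite{cw}, \cite{hmw} for (\ref{2.3}), and \cite{cns2} for (\ref{2.8}). Your sketch therefore attempts more than the paper does. Your handling of (\ref{2.1}) and (\ref{2.8}) is correct and standard: the identity $S_{k-1}(\lambda|i)-S_{k-1}(\lambda|j)=(\lambda_j-\lambda_i)S_{k-2}(\lambda|ij)$ together with the positivity of $S_{k-2}(\lambda|ij)$ (from iterated truncation into lower Garding cones) gives (\ref{2.1}), and (\ref{2.8}) is indeed Garding's concavity theorem.

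However, your proof of (\ref{2.3}) has a genuine gap. The telescoping identity
$\lambda_i S_{k-1}(\lambda|i)-\lambda_j S_{k-1}(\lambda|j)=(\lambda_i-\lambda_j)\,S_{k-1}(\lambda|ij)$
is algebraically correct, but you then assert that $S_{k-1}(\lambda|ij)\geq0$ by ``the two-deletion version of the positivity step behind (\ref{2.1}).'' That positivity step is one order lower: from $\lambda\in\Gamma_k$ one gets $(\lambda|ij)\in\Gamma_{k-2}$, hence $S_{k-2}(\lambda|ij)>0$, but $S_{k-1}(\lambda|ij)$ can be negative. Concretely, take $n=3$, $k=2$, $\lambda=(4,3,-1)\in\Gamma_2$ (since $S_1=6>0$, $S_2=5>0$). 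Then $S_{k-1}(\lambda|12)=\lambda_3=-1<0$, and indeed $\lambda_1 S_1(\lambda|1)=4\cdot2=8$ while $\lambda_2 S_1(\lambda|2)=3\cdot3=9$, so the claimed monotonicity $\lambda_1 S_{k-1}(\lambda|1)\geq\lambda_i S_{k-1}(\lambda|i)$ already fails at $i=2$. The inequality (\ref{2.3}) is still true (here $\frac{k}{n}S_k=\frac{10}{3}\leq8$), but it cannot be derived by your telescoping-then-averaging route; the usual proofs (e.g.\ in \cite{hmw} or \cite{cw}) proceed differently, typically by writing $\lambda_1 S_{k-1}(\lambda|1)=S_k(\lambda)-S_k(\lambda|1)$ and treating the cases $S_k(\lambda|1)\leq0$ and $S_k(\lambda|1)>0$ separately.
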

\begin{proof}
  All the properties are well known. For example, see \cite{l2} or \cite{hs} for a proof of (\ref{2.1}),
  %\cite{LiT} for (\ref{2.2}),
  \cite{cw} or \cite{hmw} for (\ref{2.3}) and \cite{cns2} for (\ref{2.8}).
\end{proof}

The Newton-Maclaurin inequality is as follows,

\begin{proposition}\label{pro4}
  For  $\lambda\in\Gamma_{k}$ and $k>l\geq0$, we have
  \begin{eqnarray}\label{2.7}
    \big(\frac{S_{k}(\lambda)}{C_n^k}\big)^{\frac{1}{k}}\leq\big(\frac{S_{l}(\lambda)}{C_n^l}\big)^{\frac{1}{l}},
  \end{eqnarray}
  where $C_n^k=\frac{n!}{k!(n-k)!}$. Furthermore we have
  \begin{eqnarray}\label{2.10}
  \sum_{i=1}^{n}\frac{\partial S_{k}^{\frac{1}{k}}}{\partial\lambda_{i}}\geq[C_n^k]^{\frac{1}{k}}.
  \end{eqnarray}
\end{proposition}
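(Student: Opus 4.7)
The plan is to prove (\ref{2.7}) first by the classical Newton--Maclaurin chain, and then to deduce (\ref{2.10}) as a one-line consequence by differentiating $S_k^{1/k}$ and feeding in (\ref{2.7}) with $l=k-1$.

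For (\ref{2.7}), the key ingredient is Newton's inequality: for $\lambda \in \Gamma_k$ and each $1 \leq j \leq k-1$,
\[
    \bigl(S_j(\lambda)/C_n^j\bigr)^2 \;\geq\; \bigl(S_{j-1}(\lambda)/C_n^{j-1}\bigr)\bigl(S_{j+1}(\lambda)/C_n^{j+1}\bigr).
\]
Writing $p_j := S_j(\lambda)/C_n^j$ with $p_0 = 1$, this reads $p_j^2 \geq p_{j-1}p_{j+1}$, i.e.\ the ratios $q_j := p_j/p_{j-1}$ are non-increasing in $j$. Since $q_j \geq q_l$ for $j \leq l$ and $q_j \leq q_l$ for $j > l$, I can telescope:
\[
    p_l^{k-l} \;=\; \Bigl(\prod_{j=1}^l q_j\Bigr)^{k-l} \;\geq\; q_l^{\,l(k-l)} \;\geq\; \Bigl(\prod_{j=l+1}^k q_j\Bigr)^l \;=\; (p_k/p_l)^l,
\]
which rearranges to $p_l^k \geq p_k^l$, i.e.\ (\ref{2.7}).

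For (\ref{2.10}), I would differentiate directly. Using $\partial S_k/\partial \lambda_i = S_{k-1}(\lambda|i)$ together with $\sum_i S_{k-1}(\lambda|i) = (n-k+1)\, S_{k-1}(\lambda)$ (the analogue for index $k-1$ of the third identity in Proposition \ref{pro1}), the chain rule gives
\[
    \sum_{i=1}^n \frac{\partial S_k^{1/k}}{\partial \lambda_i} \;=\; \frac{n-k+1}{k}\, S_k(\lambda)^{(1-k)/k}\, S_{k-1}(\lambda).
\]
Now I plug in (\ref{2.7}) with $l = k-1$, i.e.\ $S_{k-1} \geq C_n^{k-1}(C_n^k)^{-(k-1)/k} S_k^{(k-1)/k}$. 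The combinatorial identity $\frac{n-k+1}{k}\, C_n^{k-1} = C_n^k$ then collapses the right-hand side to $C_n^k \cdot (C_n^k)^{-(k-1)/k} = (C_n^k)^{1/k}$, precisely as claimed.

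The main obstacle is the proof of Newton's inequality itself, which is where the Garding-cone hypothesis enters in a non-trivial way. The classical route is Rolle's theorem applied to $P(t) = \prod_i (t-\lambda_i)$: after $(j-1)$ successive differentiations one is left with a degree-two polynomial whose non-negative discriminant is exactly the stated inequality, and the condition $\lambda \in \Gamma_k$ guarantees that each intermediate derivative has enough real roots for the reduction to be legitimate. Once Newton's inequality is available, everything above is just bookkeeping with binomial coefficients.
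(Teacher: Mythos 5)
Your derivation of (\ref{2.10}) is essentially identical to the paper's: both differentiate $S_k^{1/k}$, use $\sum_i S_{k-1}(\lambda|i)=(n-k+1)S_{k-1}(\lambda)$ from Proposition \ref{pro1}, then invoke (\ref{2.7}) at $l=k-1$ together with the binomial identity $\frac{n-k+1}{k}C_n^{k-1}=C_n^k$ to arrive at $(C_n^k)^{1/k}$. For (\ref{2.7}) the paper simply cites \cite{s}, whereas you supply a self-contained proof via Newton's inequality and telescoping; the telescoping itself (non-increasing ratios $q_j=p_j/p_{j-1}$, then $p_l^{k-l}\geq q_l^{l(k-l)}\geq(p_k/p_l)^l$) is correct, so this is a genuine addition in content rather than a different route. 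One small correction to your closing remark: Newton's inequality $p_j^2\geq p_{j-1}p_{j+1}$ does \emph{not} need the Garding-cone hypothesis — for any real $\lambda_1,\dots,\lambda_n$ the polynomial $P(t)=\prod_i(t-\lambda_i)$ has all real roots, and Rolle's theorem propagates this to every derivative, so the discriminant argument goes through unconditionally. What $\lambda\in\Gamma_k$ actually buys you is the positivity $p_0,\dots,p_k>0$, which is what makes the ratios $q_j$ well-defined and positive and lets you manipulate the chain of inequalities (and ultimately take $l$-th and $k$-th roots) without sign ambiguity. So the cone hypothesis enters in the telescoping step, not in Newton's inequality itself.
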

\begin{proof}
  See \cite{s} for a proof of (\ref{2.7}). For (\ref{2.10}), we use (\ref{2.7}) and Proposition \ref{pro1} to get
  \begin{eqnarray}
    \sum_{i=1}^n\frac{\partial S_k^{\frac{1}{k}}(\lambda)}{\partial\lambda_i}=\frac{1}{k}S_k^{\frac{1}{k}-1}\sum^n_{i=1}S_{k-1}(\lambda|i)=
    \frac{n-k+1}{k}S_{k}^{\frac{1}{k}-1}S_{k-1}(\lambda)\geq[C_n^k]^{\frac{1}{k}}.\nonumber
  \end{eqnarray}
\end{proof}

We define
\begin{eqnarray}
  d(x)=dist(x,\partial\Omega),\nonumber\\
  \Omega_{\mu}=\{x\in\Omega|\ d(x)<\mu\}.
\end{eqnarray}
 It is well known that there exists a small positive universal
constant $\mu_0$ such that $d(x)\in C^k(\Omega_{\mu}),\ \forall0<\mu\leq\mu_0$, provided $\partial\Omega\in C^k$.  As in Simon-Spruck \cite{ss} or Lieberman \cite{l}
(in page 331), we can extend $\nu$ by $\nu=-D d$ in $\Omega_{\mu}$ and note that $\nu$ is a $C^2(\overline{\Omega_{\mu}})$ vector field. As mentioned in the book \cite{l}, we also have the following formulas
\begin{eqnarray}
  |D\nu|+|D^2\nu|\leq C(n,\Omega),\quad\text{in}\ \Omega_{\mu},\nonumber\\
  \sum\limits_{i=1}^n\nu^iD_j\nu^i=\sum\limits_{i=1}^n\nu^iD_i\nu^j=\sum\limits_{i=1}^nd_id_{ij}=0,\ |\nu|=|Dd|=1,\quad\text{in}\ \Omega_{\mu}.\label{d}
\end{eqnarray}

\section{The zero-order and first-order estimates }\label{sec3}
As proved in \cite{Deng}, we have the following theorem.
\begin{theorem}\label{th3.1}
  Let $\Omega\subset \mathbb{R}^{n}\ (n\geq3)$ be a bounded domain with $C^{3}$ boundary.
  Let $f \in C^{1}(\Omega)$ is a positive function and $\phi\in C^{3}(\overline{\Omega})$. Suppose that $u\in C^{2}(\overline{\Omega})\cap C^{3}(\Omega)$
  is an $k$-admissible solution of the Neumann problem (\ref{eq1}).
   Then there exists a constant $C_1$
   depends only on $k$, $n$, $|f|_{C^1}$, $|\phi|_{C^3}|$ and $\Omega$, such that
\begin{eqnarray}\label{3.1}
  \sup_{\overline{\Omega}}(|u|+|Du|)\leq C_1.
\end{eqnarray}
\end{theorem}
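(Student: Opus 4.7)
The plan is to split the bound into a $C^{0}$ part and a gradient part, each of which is further split into an interior and a boundary argument.

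For the $C^{0}$ upper bound I would exploit $k$-admissibility with $m=n-1$: it forces $S_{1}(W)>0$, and a direct count using \eqref{w1} yields $S_{1}(W)=(n-1)\Delta u$. Hence $\Delta u>0$ in $\Omega$, so $u$ attains its maximum on $\partial\Omega$ by the strong maximum principle. At a boundary maximum $x_{0}$ we have $\partial u/\partial\nu(x_{0})\ge 0$, and the Robin condition $\partial u/\partial\nu=-u+\phi$ immediately gives $u(x_{0})\le\phi(x_{0})\le |\phi|_{C^{0}}$. The lower bound is more delicate because nothing prevents $u$ from having an interior minimum. My plan is to compare $u$ with an auxiliary function of the form $\underline u(x)=-A+a|x-x_{0}|^{2}$, with $A$, $a$, $x_{0}$ tuned to the size of $\phi$ and the geometry of $\Omega$: on $\partial\Omega$ the Robin data forces $u\ge -C$, while at an interior minimum of $u-\underline u$ admissibility plus the equation $\det W=f$ (which controls $\Delta u$ from above via the Newton--Maclaurin inequality \eqref{2.7}) pins down a lower bound on $u$.

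For the gradient bound the first observation is that the normal component is already controlled on $\partial\Omega$: $|u_{\nu}|=|{-u}+\phi|\le |u|_{C^{0}}+|\phi|_{C^{0}}$. The tangential components are bounded by differentiating the Robin condition along vector fields tangent to $\partial\Omega$ and using the $C^{3}$ regularity of $\partial\Omega$ and $\phi$, which yields $|Du|\le C$ on $\partial\Omega$ depending only on the already-established $|u|_{C^{0}}$, on $|\phi|_{C^{2}}$, and on the boundary geometry.

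To propagate this bound to the interior I would introduce the auxiliary function $G(x)=\tfrac{1}{2}\log|Du|^{2}+\varphi(u)$ for a monotone $\varphi$ to be chosen (for example an exponential $\varphi(u)=e^{-Bu}$). If $G$ is maximized on $\partial\Omega$ the estimate reduces to the previous step. If $G$ is maximized in $\Omega$, I contract the standard max-point inequalities $\nabla G=0$, $D^{2}G\le 0$ with the linearization $F^{ij}:=\partial\log\det W/\partial u_{ij}$, differentiate $\log\det W=\log f$ once to eliminate third derivatives, and invoke concavity \eqref{2.8} of $S_{n}^{1/n}$ together with the lower bound \eqref{2.10} on $\sum\partial S_{n}^{1/n}/\partial\lambda_{i}$. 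The favorable term $\varphi'(u)\,F^{ij}u_{i}u_{j}$ then absorbs the remaining positive contributions and closes the estimate.

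The step I expect to require the most care is the combinatorial bookkeeping coming from the non-standard structure of $W$: since $W$ is not simply $D^{2}u$, computing $F^{ij}$ and controlling $\sum F^{ii}$ from below must go through the identity \eqref{sw1}, and the boundary terms arising when $G$ attains its maximum on $\partial\Omega$ acquire extra contributions from the second fundamental form of $\partial\Omega$ that have to be absorbed using strict convexity of $\Omega$. The interior case is comparatively routine once $\varphi$ is fixed; the real work is in the boundary analysis, which couples the Robin data, the curvature of $\partial\Omega$, and the combinatorics of $W$.
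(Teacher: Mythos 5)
The paper offers no self-contained proof here; Theorem~\ref{th3.1} is proved by citing Theorems~3.1, 4.2, and 4.4 of \cite{Deng}, so a line-by-line comparison is not possible. Judged on its own terms, your outline has several genuine gaps.

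For the $C^{0}$ lower bound you propose to "pin down a lower bound on $u$" at an interior minimum of $u-\underline u$ by using that "$\det W=f$ controls $\Delta u$ from above via the Newton--Maclaurin inequality \eqref{2.7}." This is backwards: for $\lambda(W)\in\Gamma_{k}$, Newton--Maclaurin gives
\begin{equation*}
\Bigl(\tfrac{S_{k}(W)}{C^{k}_{C_n^m}}\Bigr)^{1/k}\le\tfrac{S_{1}(W)}{C_n^m},
\end{equation*}
i.e.\ a \emph{lower} bound on $S_{1}(W)=C_{n-1}^{m-1}\Delta u$. The equation alone gives no pointwise upper bound on $\Delta u$, so the comparison argument as stated does not close. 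A correct lower bound for the Robin problem has to exploit the boundary condition (for instance via a barrier whose normal derivative dominates $-u+\phi$), not an interior bound on $\Delta u$.

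For the boundary gradient you assert that "differentiating the Robin condition along vector fields tangent to $\partial\Omega$ \ldots yields $|Du|\le C$ on $\partial\Omega$." Tangential differentiation of $u_{\nu}=-u+\phi$ controls the \emph{mixed} derivative $u_{\nu\tau}$ in terms of $Du$ and the curvature of $\partial\Omega$; it does not bound the tangential gradient $u_{\tau}$ itself. The boundary gradient bound must come from the interior auxiliary-function argument (your $G$), or from a separate barrier construction, not directly from differentiating the boundary data.

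Two smaller points. First, Theorem~\ref{th3.1} assumes only that $\Omega$ is a bounded domain with $C^{3}$ boundary; no convexity is hypothesized at this stage, so your appeal to "strict convexity of $\Omega$" to absorb second-fundamental-form contributions uses a hypothesis that is not available. Second, the theorem is stated for $k$-admissible solutions of the general Hessian equation $S_{k}(W)=f$ in \eqref{eq1}, while your linearization and concavity arguments are written for $\log\det W$ (the case $k=C_n^m$); they should be phrased for $S_{k}^{1/k}$ (or $\log S_{k}$), which is what \eqref{2.8} and \eqref{2.10} supply.
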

\begin{proof}
See Theorem 3.1 in \cite{Deng} for the zero-order estimate. See Theorem 4.2 and Theorem
4.4 in \cite{Deng} for the first-order estimate.
\end{proof}

\section{Global Second Order Derivatives Estimates}\label{sec4}
\medskip

Generally, the double normal estimates are the most important and hardest  parts for the Neumann problem.
As in \cite{ltu} and \cite{mq},
we construct sub and super barrier function to give lower and upper bounds for $u_{\nu\nu}$ on the boundary. Then we give the global
second order estimates.

In this section, we establish the following global second order estimate.
\begin{theorem}\label{th5.1}
Suppose $\Omega\subset\mathbb{R}^{n}\ (n\geq3)$ is a bounded strictly
  convex domain  with $C^{4}$ boundary, $ m= n-1$. Denote $\kappa_{max}(x)
 $ ($\kappa_{min}(x)$) the maximum (minimum) principal curvature at $x\in\partial\Omega$ such that $\kappa_{max}-\kappa_{min}<\frac{\gamma H}{2(n-1)(n-2)}$
 for any
 $\gamma\in[\frac{1}{2},1)$. Let $f(x,z)\in C^{2}(\Omega\times \mathbb{R})$ is a positive function and $\phi(x,z)\in C^{3}(\overline{\Omega}\times \mathbb{R})$ is decreasing with $z$. If $u \in C^{3,\alpha}(\overline{\Omega})$ is a strictly $(n-1)$-convex solution of the Neumann problem
 \begin{eqnarray}\label{eq2}
   \left\{
   \begin{aligned}
     \det(W)=f(x,u),\quad\text{in}\ \Omega,\\
     \frac{\partial u}{\partial \nu}=\phi(x,u),\quad\text{on}
     \ \partial\Omega.
   \end{aligned}
   \right.
 \end{eqnarray}
 Then we have
\begin{eqnarray}
\sup_{\overline{\Omega}}|D^{2}u|\leq C,\label{5.5}
\end{eqnarray}
where $C$ depends only on $n$, $m$, $k$, $\gamma$, $|u|_{C^{1}(\overline{\Omega})}$,$|f|_{C^{2}(\overline{\Omega}\times[-M_{0},M_{0}])}$, $\min f$, $|\phi|_{C^{3}(\overline\Omega\times[-M_{0},M_{0}])}$ and $\Omega$, where $M_{0}=\sup\limits_{\Omega}|u|$.
\end{theorem}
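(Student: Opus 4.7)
The strategy mirrors that of Lions-Trudinger-Urbas \cite{ltu} and Ma-Qiu \cite{mq}: reduce the global estimate to an estimate on $\partial\Omega$ via a maximum principle, and then bound the pure tangential, the mixed tangential-normal, and the double-normal second derivatives on $\partial\Omega$ separately. For the reduction, I would consider $M=\max_{\overline{\Omega}\times \mathbb{S}^{n-1}} u_{\xi\xi}$; if the maximum is attained at an interior point $x_0$ in a direction $\xi_0=e_1$ chosen within a frame that diagonalizes $D^2u$ at $x_0$, then differentiating $\log\det W=\log f(x,u)$ twice along $e_1$ and invoking the concavity inequality \eqref{2.8} together with the uniform ellipticity in $\Gamma_n$ from \eqref{2.10} yields a Pogorelov-type upper bound on $u_{11}(x_0)$ in terms of the $C^1$ estimate of Theorem \ref{th3.1}. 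Consequently the maximum of $u_{\xi\xi}$ must be attained on $\partial\Omega$.

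On the boundary, fix $x_0\in\partial\Omega$ and adapt a frame with $e_n=\nu(x_0)$. Differentiating the Neumann condition $u_\nu=\phi(x,u)$ once and twice along tangential directions expresses the mixed derivatives $u_{\alpha n}$ and the pure tangential ones $u_{\alpha\beta}$ ($\alpha,\beta<n$) in terms of $\phi$, its derivatives, the principal curvatures, and already bounded lower order data; combined with Theorem \ref{th3.1} this already yields $|u_{\alpha\beta}(x_0)|\le C$. For the genuinely new mixed estimate $|u_{\alpha n}(x_0)|\le C$ I would construct a Lions-Trudinger-Urbas-type barrier on a small half-ball $\Omega\cap B_r(x_0)$ involving $\pm(u_\alpha-\phi_\alpha)$, the distance $d(x)$ to $\partial\Omega$, and a quadratic term in $|x-x_0|$; the uniform ellipticity \eqref{2.10} and strict convexity of $\Omega$ let me tune the coefficients so that the linearized operator has the correct sign on the barrier, and the tangential differentiation of the boundary condition combined with Hopf's lemma delivers the bound.

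The main obstacle is the double-normal estimate $|u_{\nu\nu}(x_0)|\le C$. Here the special structure of $W$ for $m=n-1$ enters: when $D^2u$ is diagonal with eigenvalues $\mu_1,\dots,\mu_n$, the eigenvalues of $W$ are $\lambda_i=\mathrm{tr}(D^2u)-\mu_i$, so $u_{\nu\nu}$ couples with \emph{every} $\lambda_i$ rather than a single one, unlike the classical Monge-Amp\`ere case. Following \cite{mq}, I would construct upper and lower barriers of the form $\phi(x,u)-u_\nu\pm A d(x)\mp \frac{B}{2}d(x)^2$ and apply the linearized operator; after substituting the tangential and mixed estimates already obtained, the resulting boundary expression becomes a weighted combination of the principal curvatures of $\partial\Omega$, and its sign controls the sign of $u_{\nu\nu}$ modulo lower order terms. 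The quantitative hypothesis $\kappa_{\max}-\kappa_{\min}<\gamma H/[2(n-1)(n-2)]$ is precisely what forces this combination to have the correct sign: the factor $2(n-1)(n-2)$ arises combinatorially from pairing the weights $\sum_{i\in\overline{\alpha}}S_{n-1}(W|N_{\overline{\alpha}})$ in \eqref{sw1} with the second fundamental form under the $(n-1)$-convex structure, while $\gamma<1$ provides the slack needed to absorb lower order errors from $\phi$, $f$, and $|Du|$.

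The step I expect to be most delicate is the precise algebraic identification of this dominant curvature combination; once $|u_{\nu\nu}|\le C$ is obtained on $\partial\Omega$, combining with the tangential and mixed estimates gives $|D^2u|\le C$ on $\partial\Omega$, and the reduction step promotes this to the global bound \eqref{5.5}.
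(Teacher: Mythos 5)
There is a genuine structural gap: you have the easy and hard boundary estimates reversed, and you omit the bootstrap that actually closes the argument.

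Concretely, differentiating $u_\nu=\phi(x,u)$ once tangentially gives $u_{\tau\nu}=D_\tau\phi+\phi_z u_\tau-u_l D_\tau\nu^l$, so the \emph{mixed} estimate $|u_{\alpha n}|\le C$ is the trivial one and needs no barrier. Differentiating twice tangentially produces $u_{\tau\tau\nu}$, a \emph{third}-order derivative — it does not bound $u_{\tau\tau}$. So your claim that ``this already yields $|u_{\alpha\beta}(x_0)|\le C$'' is false: pure tangential second derivatives on $\partial\Omega$ are \emph{not} controlled by the boundary condition alone and require a global maximum-principle argument. In the paper this is done by one auxiliary function $v(x,\xi)=u_{\xi\xi}-v'(x,\xi)+K_1|x|^2+K_2|Du|^2$ (Lemma \ref{le4.1}), where the ``correction'' $v'$ is built precisely from the tangential differentiation of the boundary condition, and the output is $\sup_{\overline\Omega}|D^2u|\le C_0(1+N)$ with $N=\sup_{\partial\Omega}|u_{\nu\nu}|$ — not an absolute constant. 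Your ``LTU-type barrier for the mixed derivatives'' is solving a problem that is already trivial, while the tangential estimate you wave away is the genuinely hard one.

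The second gap is the loop-closing structure. The reduction gives all second derivatives $\lesssim 1+N$, and the barrier lemmas (Lemmas \ref{le4.3}, \ref{le4.4}) give $|u_{\nu\nu}|\le C+\sigma N$ on $\partial\Omega$ for a chosen $\sigma\in(0,1)$; only then does $N\le C+\sigma N$ yield $N\le C/(1-\sigma)$. This is why the paper's barrier is $P=g(x)(Du\cdot\nu-\phi)-(A+\sigma N)h$ with $g=1-\beta h$, $h=-d+K_3d^2$: the coefficient $A+\sigma N$ is essential, and the multiplicative factor $g$ is needed (together with the curvature-gap hypothesis $\kappa_{\max}-\kappa_{\min}<\gamma H/[2(n-1)(n-2)]$ and the key sub/super-solution Lemma \ref{le4.2} for $h$) to dominate the bad terms $2g F^{ii}u_{ii}D_i\nu^i$ that arise because, for $m=n-1$, $u_{\nu\nu}$ enters every eigenvalue $\lambda_i$ of $W$. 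Your proposed barrier $\phi(x,u)-u_\nu\pm Ad\mp\tfrac{B}{2}d^2$ has no $\sigma N$ term and no $g$ factor, and the proof will not close: you would get $|u_{\nu\nu}|\le C(1+N)$, which is no improvement. Finally, the interior reduction in the paper is not Pogorelov-type — it is the simpler observation that $\widetilde F^{ij}v_{ij}>0$, so $v$ cannot peak in the interior; the bound is then extracted from the boundary-max cases, not from an interior a priori inequality.
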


Throughout the rest of this paper, we always admit the Einstein's summation convention. All repeated indices come from 1 to n.
We will denote $F(D^2u)=\det(W)$ and
\begin{eqnarray*}
  F^{ij}=\frac{\partial F(D^2u)}{\partial u_{ij}}=\frac{\partial \det(W)}{\partial w_{\overline{\alpha}\overline{\beta}}}
\frac{\partial w_{\overline{\alpha}\overline{\beta}}}{\partial u_{ij}}.
\end{eqnarray*}
From (\ref{w1}) and (\ref{sw1}) in Proposition \ref{pro2} we have, for any $1\leq j\leq n$,
\begin{eqnarray}
F^{ii}=\sum_{i\in\overline{\alpha}}\frac{\partial \det(W)}{\partial w_{\overline{\alpha}\overline{\alpha}}}
=\sum_{i\in\overline{\alpha}}S_{n-1}(W|N_{\overline{\alpha}}).\label{F1}
\end{eqnarray}
Throughout the rest of the paper, we will denote $\mathcal{F}=\sum\limits_{i=1}^{n}F^{ii}
=(n-1)\sum\limits_{N_{\overline{\alpha}}=1}^{n}S_{n-1}(W|N_{\overline{\alpha}})$ for simplicity.

 \subsection{Reduce the global second derivative estimates into double normal derivatives estimates on boundary}

Using the method of Lions-Trudinger-Urbas \cite{ltu}, we can reduce the second derivative estimates of the solution into the  boundary double normal estimates.

\begin{lemma}\label{le4.1}
 Let $\Omega\subset \mathbb{R}^n$ be a bounded strictly
 convex domain with $C^4$ boundary. Assume $f(x,z)\in C^{2}(\overline{\Omega}\times \mathbb{R})$ is positive and $\phi(x,z)\in C^{3}(\overline{\Omega}\times\mathbb{R})$ is
 decreasing with $z$. If $u$ is a strictly $(n-1)$-convex solution of the Neumann problem (\ref{eq2}), denote $N=\sup\limits_{\partial\Omega}|u_{\nu\nu}|$, then we have
 \begin{eqnarray}\label{4.1}
 \sup_{\overline{\Omega}}|D^2u|\leq C_{0}(1+N).
 \end{eqnarray}
where $C_0$ depends on $n$, $m$, k, $|u|_{C^{1}(\overline{\Omega})}$,
$|f|_{C^{2}(\overline{\Omega}\times [-M_{0},M_{0}])}$,
$\min f$, $|\phi|_{C^{3}(\overline{\Omega}\times[-M_{0},M_{0}])}$ and $\Omega$.
%Here $M_{0}=\sup\limits_{\overline{\Omega}}|u|$.
\end{lemma}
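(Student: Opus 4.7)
The plan is to follow the Lions--Trudinger--Urbas reduction scheme (\cite{ltu}, \cite{mq}) adapted to the $(n-1)$-convex setting. The argument has three components: an eigenvalue reduction using strict $(n-1)$-convexity, an interior maximum principle argument exploiting the concavity of the operator, and a boundary analysis tying the global bound to $N$.

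\textbf{Eigenvalue reduction.} Since $u$ is strictly $(n-1)$-convex, every sum of $n-1$ eigenvalues of $D^2u$ is positive, i.e., $\Delta u - \mu_j > 0$ for each eigenvalue $\mu_j$. Hence $\mu_j < \Delta u$; combining this with $\mu_j = \Delta u - \sum_{k\neq j}\mu_k$ and $\sum_{k\neq j}\mu_k \leq (n-1)\max_k \mu_k$ yields $\mu_j > -(n-2)\max_k \mu_k$. Consequently
\[ |D^2 u(x)| \leq C(n)\,\mathcal{M}(x), \qquad \mathcal{M}(x) := \max_{|\xi|=1} u_{\xi\xi}(x), \]
so it suffices to bound $\sup_{\overline{\Omega}}\mathcal{M}$ by $C_0(1+N)$.

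\textbf{Auxiliary function and interior case.} I would then introduce an auxiliary function of the form
\[ v(x,\xi) = u_{\xi\xi}(x) - 2(\xi\cdot\nu(x))\,\xi^{k}\bigl(\phi_{k} - u_{k} - u_{j}D_{k}\nu^{j}\bigr)(x) + A(\xi\cdot\nu(x))^{2}, \]
defined on $\overline{\Omega_{\mu_0}}\times\mathbb{S}^{n-1}$ using the extension $\nu = -Dd$ from (\ref{d}), glued smoothly to $u_{\xi\xi}$ away from the boundary, with $A$ a large constant to be chosen. The design is such that (i) for $\xi$ tangent to $\partial\Omega$ at a boundary point, $v = u_{\xi\xi}$; (ii) for $\xi = \nu$ at a boundary point, $v$ is bounded by $u_{\nu\nu}$ plus $C^1$-data; (iii) the linearization $F^{ij}D_{ij}v$ has a sign favorable to the maximum principle. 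Let $(x_0,\xi_0)$ realize the supremum of $v$; by (i)--(ii), a bound on $v(x_0,\xi_0)$ gives the desired bound on $\mathcal{M}$. If $x_0$ is interior, I differentiate $\log\det W = \log f$ twice along $\xi_0$: the concavity of $S_n^{1/n}$ on $\Gamma_n$ (Proposition~\ref{pro3}, (\ref{2.8})) yields $F^{ij,kl}u_{ij\xi_0}u_{kl\xi_0}\leq 0$, while the lower bound $\mathcal{F}\geq c(n,\min f)>0$ from Proposition~\ref{pro4}, (\ref{2.10}) together with $\det W = f>0$ applied at the critical point $D_i v = 0$, $(D_{ij}v) \leq 0$ then forces $v(x_0,\xi_0) \leq C$ in terms of the data.

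\textbf{Boundary case and main obstacle.} If $x_0 \in \partial\Omega$, I decompose $\xi_0 = \alpha\nu + \beta\tau$ with $\tau$ unit tangential. Then $u_{\xi_0\xi_0} = \alpha^{2}u_{\nu\nu} + 2\alpha\beta\,u_{\nu\tau} + \beta^{2}u_{\tau\tau}$: the first term is bounded by $N$ by hypothesis; the mixed term $u_{\nu\tau}$ is controlled by a single tangential differentiation of the Neumann condition $u_{\nu}+u = \phi$, which yields $u_{\nu\tau} = \phi_{\tau}-u_{\tau}-u_{j}D_{\tau}\nu^{j}$ on $\partial\Omega$; and the purely tangential case is handled by exploiting the critical condition $D_{\nu}v(x_0,\xi_0)\leq 0$ (a Hopf-type argument) together with the construction of $v$. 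The principal obstacle is precisely this purely tangential sub-case, where $u_{\tau\tau}$ is not directly accessible from the boundary condition: one must design the correction in $v$ and choose $A$ so that the sign constraint $D_{\nu}v \leq 0$ at a boundary maximum, combined with the specific structure of $W$ tracked via (\ref{w1})--(\ref{w3}), yields a usable upper bound on $u_{\tau\tau}(x_0)$ in terms of data and $N$. The careful bookkeeping of $F^{ij}$ and $F^{ij,kl}$ under the relations (\ref{w1})--(\ref{w3}) is where the preliminary propositions of Section~\ref{sec2} are essential.
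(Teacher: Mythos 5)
Your overall architecture—reduce to bounding $u_{\xi\xi}$ from above, run a maximum principle on an auxiliary function $v(x,\xi)$ on $\Omega\times\mathbb{S}^{n-1}$, split the boundary maximum into tangential and non-tangential cases—matches the paper's argument (which is the Lions--Trudinger--Urbas scheme). But there is a genuine gap in your interior case, and it is not cosmetic.

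\textbf{The interior argument does not close.} You claim that at an interior maximum $(x_0,\xi_0)$, the concavity inequality $\widetilde{F}^{pq,rs}u_{pq\xi_0}u_{rs\xi_0}\leq 0$ together with $\mathcal{F}\geq c>0$ and $\det W=f>0$ ``forces $v(x_0,\xi_0)\leq C$.'' But what these facts give you at a critical point is, after contracting $v_{ij}$ with $\widetilde F^{ij}$ and using $\widetilde F^{ij}u_{ij\xi_0\xi_0}\geq D_{\xi_0\xi_0}\widetilde f = \widetilde f_{\xi_0\xi_0}+2\widetilde f_{\xi_0 z}u_{\xi_0}+\widetilde f_z u_{\xi_0\xi_0}$, a relation in which the only $u_{\xi_0\xi_0}$ appears multiplied by $\widetilde f_z$, which has no sign in general. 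If $\widetilde f_z\leq 0$, the inequality becomes vacuous as $u_{\xi_0\xi_0}\to+\infty$; the constant $\mathcal{F}\geq c$ does not save you because it multiplies nothing of definite sign in your candidate $v$. The paper's $v$ contains two extra terms, $K_1|x|^2$ and $K_2|Du|^2$, precisely to fix this: $K_2|Du|^2$ produces $2K_2\widetilde F^{ii}u_{ii}^2$, and via the structural inequality $\widetilde F^{11}u_{11}^2\geq \frac{\widetilde f}{n(n-1)}u_{11}$ (the paper's (\ref{4.11}), which uses the eigenvalue structure of $W$) this generates $\frac{K_2\widetilde f}{n(n-1)}u_{\xi\xi}$, which dominates $\widetilde f_z u_{\xi\xi}$ after choosing $K_2$ large; then $K_1|x|^2$ produces the term $2K_1\widetilde{\mathcal F}$, which overwhelms all the bounded errors because $\widetilde{\mathcal F}\geq 1$ by Newton--Maclaurin. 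That chain yields $\widetilde F^{ij}v_{ij}>0$ and hence no interior maximum—not a bound at an interior maximum. Your ``glued'' auxiliary function has neither of these terms, and the gluing itself introduces uncontrolled error in the transition region.

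Two smaller points. First, your eigenvalue reduction is in the right spirit (both approaches conclude $|D^2u|\lesssim \max_\xi u_{\xi\xi}$; the paper instead uses $\lambda(W)\in\Gamma_2$ to bound off-diagonal entries by $S_1$), but the constant $-(n-2)$ should be $-(n-1)$: from $\Delta u - \mu_j>0$ and $\Delta u>0$ you get $\mu_j>-\sum_{k\neq j}\mu_k\geq-(n-1)\max_k\mu_k$. Second, your correction term $v'$ cites $u_\nu+u=\phi$; the boundary condition in (\ref{eq2}) is $u_\nu=\phi(x,u)$ with $\phi_z\leq 0$, and that sign is actually used in the paper's tangential case (to drop a term) and must be preserved. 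The boundary case analysis you sketch (tangential via a Hopf-type inequality and bounds on mixed second derivatives from criticality in $\xi$; non-tangential via $\xi_0=\alpha\tau+\beta\nu$ and the boundary Neumann relation) is on the right track and matches the paper, but the interior gap must be repaired before the lemma is proved.
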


\begin{proof}
Write equation (\ref{eq2}) in the form of
\begin{equation}\label{eq3}
\left\{
\begin{aligned}
 \det(W)^{\frac{1}{n}}=\widetilde{f}(x,u),&&\text{in}\ \ \Omega,\\
\frac{\partial u}{\partial\nu}=\phi(x,u),&&\ \text{on}\ \ \partial\Omega.
\end{aligned}
\right.
\end{equation}
where $\widetilde{f}=f^{\frac{1}{n}}$.
Since $\lambda(W)\in\Gamma_n\subset\Gamma_2$ in $\mathbb{R}^{n}$, we have
 \begin{eqnarray}
   \sum_{i\neq j}|u_{ij}|\leq c(n)S_1(W)=mc(n)S_1(D^2u),\label{4.2}
 \end{eqnarray}
 where $c(n)$ is a universal number independent of $u$.
 It is sufficiently to prove (\ref{4.1}) for any direction $\xi\in\mathbb{S}^{n-1}$, that is
  \begin{eqnarray}
 u_{\xi\xi}\leq C_{0}(1+N).\label{4.3}
 \end{eqnarray}

 We consider the following auxiliary function in $\Omega\times \mathbb{S}^{n-1}$,
 \begin{eqnarray}
   v(x,\xi)=u_{\xi\xi}-v'(x,\xi)+K_1|x|^2+K_2|Du|^2,\label{4.4}
 \end{eqnarray}
 where $v'(x,\xi)=a^{l}u_{l}+b= 2(\xi\cdot\nu)\xi'\cdot(\phi_{x_{l}}+\phi_{z}u_{l}-u_{l}D\nu^{l})$, with  $\xi'=\xi-(\xi\cdot\nu)\nu$ and $a^{l}=2(\xi\cdot\nu)(\xi'^{l}\phi_{z}-\xi'^{i}D_{i}\nu^{l})$. $K_{1}$, $K_{2}$ are positive constants to be determined. By a direct computation, we have
 By direct computations, we have
  \begin{eqnarray}
    v_{i}&=&u_{\xi\xi i}-D_{i}a^{l}u_{l}-a^{l}u_{ii}-D_{i}b+2K_{1}x_{i}+2K_{2}u_{l}u_{li},\label{4.5}\\
    v_{ij}&=& u_{\xi\xi ij}-D_{ij}a^{l}u_{l}-D_{i}a^{l}u_{lj}-D_{j}a^{l}u_{li}-a^{l}u_{lij}-D_{ij}b\nonumber\\
    &&+2K_{1}\delta_{ij}+2K_{2}u_{li}u_{lj}+2K_{2}u_{l}u_{lij}.\label{4.6}
    \end{eqnarray}

  Denote $\widetilde{F}(D^2u)=\det(W)^{\frac{1}{n}}$, and
    \begin{eqnarray}
      \widetilde{F}^{ij}=\frac{\partial \widetilde{F}}{\partial u_{ij}}=\frac{1}{n}
      \det(W)^{\frac{1-n}{n}}\frac{\partial \det(W)}{\partial    w_{\overline{\alpha}\overline{\beta}}}\frac{\partial w_{\overline{\alpha}\overline{\beta}}}{\partial u_{ij}},
    \end{eqnarray}
    and
    \begin{eqnarray}
      \widetilde{F}^{pq,rs}&=&\frac{\partial^{2} \widetilde{F}}{\partial u_{pq}\partial u_{rs}}\nonumber\\
      &=&\frac{1}{n}\det(W)^{\frac{1-n}{n}}
      \frac{\partial^{2} \det(W)}{\partial w_{\overline{\alpha}\overline{\beta}}\partial w_{\overline{\eta}\overline{\xi}}}\frac{\partial w_{\overline{\alpha}\overline{\beta}}}{\partial u_{pq}}\frac{\partial w_{\overline{\eta}\overline{\xi}}}{\partial u_{rs}},
    \end{eqnarray}
    since $w_{\overline{\alpha}\overline{\beta}}$ is a linear combination of $u_{ij},\ 1\leq i,j\leq n$.
 Differentiating the equation (\ref{eq3}) twice, we have
 \begin{eqnarray}
   \widetilde{F}^{ij}u_{ijl}=D_l\widetilde{f},\label{4.7}
 \end{eqnarray}
 and
 \begin{eqnarray}
   \widetilde{F}^{pq,rs}u_{pq\xi}u_{rs\xi}+\widetilde{F}^{ij}u_{ij\xi\xi}
   =D_{\xi\xi}\widetilde{f}.\label{4.8}
 \end{eqnarray}
 By the concavity of $\det(W)^{\frac{1}{n}}$ operator with respect to $W$, we have
    \begin{eqnarray}
      D_{\xi\xi}\widetilde{f}=
      \widetilde{F}^{pq,rs}u_{pq\xi}u_{rs\xi}+\widetilde{F}^{ij}u_{ij\xi\xi}
      \leq\widetilde{F}^{ij}u_{ij\xi\xi}.\label{4.9}
    \end{eqnarray}

 Now we contract (\ref{4.6}) with $\widetilde{F}^{ij}$ to get, using
 (\ref{4.7})-(\ref{4.9}),
 \begin{eqnarray}
      \widetilde{F}^{ij}v_{ij}&=&\widetilde{F}^{ij}u_{ij\xi\xi}-\widetilde{F}^{ij}D_{ij}a^{l}u_{l}-2\widetilde{F}^{ij}D_{i}a^{l}u_{lj}
      -\widetilde{F}^{ij}u_{ijl}a^{l}\nonumber\\
      &&-\widetilde{F}^{ij}D_{ij}b+2K_{1}\widetilde{\mathcal{F}}+2K_{2}\widetilde{F}^{ij}u_{il}u_{jl}+2K_{2}\widetilde{F}_{ij}u_{ijl}u_{l}\nonumber\\
      &\geq&D_{\xi\xi}\widetilde{f}-\widetilde{F}^{ij}D_{ij}a^{l}u_{l}-2\widetilde{F}^{ij}D_{i}a^{l}u_{ij}-a^{l}D_{l}\widetilde{f}-\widetilde{F}^{ij}D_{ij}b\nonumber\\
      &&+2K_{1}\widetilde{\mathcal{F}}+2K_{2}\widetilde{F}^{ij}u_{il}u_{jl}+2K_{2}u_{l}D_{l}\widetilde{f}.
    \end{eqnarray}
 where $\widetilde{\mathcal{F}}=\sum\limits_{i=1}^{n}\widetilde{F}^{ii}$.
 Note that
    \begin{eqnarray}
      D_{\xi\xi}\widetilde{f}=\widetilde{f}_{\xi\xi}+2\widetilde{f}_{\xi z}u_{\xi}+\widetilde{f}_{z}u_{\xi\xi},\nonumber\\
      D_{ij}a^{l}=2(\xi\cdot\nu)\xi'^{l}\phi_{zz}u_{ij}+r^{l}_{ij},\nonumber\\
      D_{ij}b=2(\xi\cdot\nu)\xi'^{l}\phi_{x_{l}z}u_{ij}+r_{ij},\nonumber
    \end{eqnarray}
    with $|r^{l}_{ij}|, |r_{ij}|\leq C(|u|_{C^{1}}, |\phi|_{C^{3}}, |\partial \Omega|_{C^{4}})$. At the maximum point $x_{0}\in\Omega$ of $v$, we can assume $(u_{ij})_{n\times n}$ is diagonal. It follows that, by the Cauchy-Schwartz inequality,
 \begin{eqnarray}\label{4.10}
      \widetilde{F}^{ij}v_{ij}&\geq&-C(\mathcal{\widetilde{F}}+K_2+1)-C\widetilde{F}^{ii}|u_{ii}|+\widetilde{f}_{z}u_{\xi\xi}\nonumber\\
      &&+2K_{1}\widetilde{\mathcal{F}}+2K_{2}\widetilde{F}^{ii}u_{ii}^2\nonumber\\
      &\geq&-C(\mathcal{\widetilde{F}}+K_2+1)+\widetilde{f}_{z}u_{\xi\xi}\nonumber\\
      &&+2K_{1}\widetilde{\mathcal{F}}+(2K_{2}-1)\widetilde{F}^{ii}u_{ii}^2,
    \end{eqnarray}
    where $C=C(|u|_{C^{1}}, |\phi|_{C^{3}}, |\partial \Omega|_{C^{4}}, |f|_{C^{2}})$.

 Assume $u_{11}\geq u_{22}\cdots\geq u_{nn}$, and denote $\lambda_{1}\geq\lambda_{2}\geq\cdots\geq\lambda_{n}$ the eigenvalues of the matrix $(w_{\overline{\alpha}\overline{\beta}})_{n\times n}$.
It is easy to see    $\lambda_{1}=u_{11}+\sum\limits_{i=2}^{n-1}u_{ii}\leq (n-1)u_{11}$. Then we have, by (\ref{sw1}) in Proposition \ref{pro2} and (\ref{2.8}) in Proposition \ref{pro3},
  \begin{eqnarray}
    \widetilde{F}^{11}u_{11}^{2}&=&\sum_{1\in\overline{\alpha}}
    \frac{1}{n}\det(W)^{\frac{1-n}{n}} S_{n-1}(\lambda|N_{\overline{\alpha}})u_{11}^{2}\nonumber\\
    &\geq&\frac{1}{(n-1)n}\det(W)^{\frac{1-n}{n}} S_{n-1}(\lambda|1)\lambda_{1}u_{11}\nonumber\\
    &=&\frac{1}{(n-1)n}\det(W)^{\frac{1}{n}}u_{11}=\frac{\widetilde{f}}{(n-1)n}
    u_{11}.\label{4.11}
  \end{eqnarray}
 We can assume $u_{\xi\xi}\geq 0$, otherwise we have (\ref{4.3}). Plug (\ref{4.11}) into (\ref{4.10}) and use the Cauchy-Schwartz inequality, then
\begin{eqnarray}
  \widetilde{F}^{ii}v_{ii}&\geq&(K_{2}-1)\sum_{i=1}^{n}\widetilde{F}^{ii}u_{ii}^{2}
  +(\frac{K_{2}\widetilde{f}}{(n-1)n}+\widetilde{f}_{z})u_{\xi\xi}
  \\&&+(2K_{1}-C)\mathcal{\widetilde{F}}-C(K_2+1).\nonumber
\end{eqnarray}
Choose $K_{2}=\frac{(n-1)\max |f_{z}|}{\min f}+1$ and $K_{1}=C(K_2+2)+1$. It follows that
\begin{eqnarray}
    \widetilde{F}^{ii}v_{ii}&\geq&(2K_{1}-C)\widetilde{\mathcal{F}}-C(K_2+1)>0,
\end{eqnarray}
since we have $\widetilde{\mathcal{F}}\geq 1$ from (\ref{2.10}). This implies that $v(x,\xi)$ attains its maximum on the boundary by the maximum principle. Now we assume $(x_{0},\xi_{0})\in \partial\Omega\times \mathbb{S}^{n-1}$ is the maximum pint of $v(x,\xi)$ in $\overline{\Omega}\times\mathbb{S}^{n-1}$.  Then we consider two cases as follows,

$\mathbf{Case1}$. $\xi_{0}$ is a tangential vector at $x_{0}\in\partial\Omega$.

We directly have $\xi_0\cdot\nu=0$ , $\nu=-Dd$, $v'(x_0,\xi_0)=0$, and $u_{\xi_0, \xi_0}(x_0)>0$. As in \cite{l}, we define
\begin{eqnarray}
  c^{ij}=\delta_{ij}-\nu^i\nu^j,\quad\ \text{in}\ \Omega_{\mu},
\end{eqnarray}
and it is easy to see that $c^{ij}D_j$ is a tangential direction  on $\partial\Omega$.
We compute at $(x_0, \xi_0)$.

From the boundary condition, we have
\begin{eqnarray}
  u_{li}\nu^l&=&(c^{ij}+\nu^i\nu^j)\nu^lu_{lj}\nonumber\\
 % &=&c^{ij}[D_j(\nu^lu_l)-D_j\nu^lu_l]+\nu^i\nu^j\nu^lu_{lj}\nonumber\\
  &=&c^{ij}u_j\phi_z+c^{ij}\phi_{x_j}-c^{ij}u_lD_j\nu^l+\nu^i\nu^j\nu^lu_{lj}.\label{4.12}
\end{eqnarray}
It follows that
\begin{eqnarray}
  u_{lip}\nu^l&=&[c^{pq}+\nu^p\nu^q]u_{liq}\nu^l\nonumber\\
  %&=&c^{pq}[D_q(u_{li}\nu^l)-u_{li}D_q\nu^l]+\nu^p\nu^q\nu^lu_{liq}\nonumber\\
  &=&c^{pq}D_q(c^{ij}u_j\phi_z+c^{ij}\phi_{x_j}-c^{ij}u_lD_j\nu^l+\nu^i\nu^j\nu^lu_{lj})
  -c^{pq}u_{li}D_q\nu^l+\nu^p\nu^q\nu^lu_{liq},\nonumber
\end{eqnarray}
then we obtain
\begin{eqnarray}
  u_{\xi_0\xi_0\nu}&=&\sum_{ilp=1}^{n}\xi_0^i\xi_0^pu_{lip}\nu^l\nonumber\\
 % &=&\sum_{ip=1}^{n}\xi_0^i\xi_0^p[c^{pq}D_q(c^{ij}u_j\phi_z+c^{ij}\phi_{x_j}-c^{ij}u_lD_j\nu^l+\nu^i\nu^j\nu^lu_{lj})\nonumber\\
  %&&-c^{pq}u_{li}D_q\nu^l+\nu^p\nu^q\nu^lu_{liq}]\nonumber\\
  &=&\sum_{i=1}^{n}\xi_0^i\xi_0^q[D_q(c^{ij}u_j\phi_z+c^{ij}\phi_{x_j}-c^{ij}u_lD_j\nu^l+\nu^i\nu^j\nu^lu_{lj})-u_{li}D_q\nu^l]\nonumber\\
    &\leq& -2\xi_0^i\xi_0^q u_{li}D_q\nu^l+C(1+|u_{\nu\nu}|).
\end{eqnarray}
We use $\phi_z\leq0$ in the last inequality.
We assume $\xi_0=e_1$, it is easy to get the bound for $u_{1i}(x_0)$ for $i>1$ from the maximum of $v(x,\xi)$ in the $\xi_0$ direction. In fact, we can assume $\xi(t)=\frac{(1, t, 0,\cdots, 0)}{\sqrt{1+t^2}}$. Then we have
\begin{eqnarray}
  0&=&\frac{dv(x_0,\xi(t))}{dt}|_{t=0}\nonumber\\
  &=&2u_{12}(x_0)-2\nu^2(\phi_zu_1-u_lD_l\nu^l),\nonumber
\end{eqnarray}
so
\begin{eqnarray}
  |u_{12}|(x_0)\leq C+C|Du|.\label{4.13}
\end{eqnarray}
Similarly, we have for $\forall i>1$,
\begin{eqnarray}
  |u_{1i}|(x_0)\leq C+C|Du|.\label{4.14}
\end{eqnarray}
Thus we have, by $D_1\nu^1\geq\kappa_{min}>0$,
\begin{eqnarray}
  u_{\xi_0\xi_0\nu}&\leq&-2D_1\nu^1u_{11}+C(1+|u_{\nu\nu}|)\nonumber\\
  &\leq&-2\kappa_{min} u_{\xi_0\xi_0}+C(1+|u_{\nu\nu}|).\nonumber
\end{eqnarray}
On the other hand, we have from the Hopf lemma, (\ref{4.5}) and (\ref{4.14}),
\begin{eqnarray}
  0&\leq&v_{\nu}(x_0,\xi_0)\nonumber\\
  &=&u_{\xi_0\xi_0 \nu}-D_{\nu}a^{l}u_{l}-a^{l}u_{\nu\nu}-D_{\nu}b+2K_{1}x_{i}\nu^i+2K_{2}u_{l}u_{l\nu}\nonumber\\
    &\leq&-2\kappa_{min}u_{\xi_0\xi_0}+C(1+|u_{\nu\nu}|).\nonumber
\end{eqnarray}
Then we get,
\begin{eqnarray}
  u_{\xi_0\xi_0}(x_0)\leq C(1+|u_{\nu\nu}|).
\end{eqnarray}

   \textbf{Case2.} $\xi_0$ is non-tangential.

   We can find a tangential vector $\tau$, such that $\xi_0 = \alpha\tau+\beta\nu$,
with $\alpha^2 + \beta^2 = 1$. Then we have
\begin{eqnarray}
  u_{\xi_0\xi_0}(x_0)&=&\alpha^2u_{\tau\tau}(x_0)+\beta^2u_{\nu\nu}(x_0)+2\alpha\beta u_{\tau\nu}(x_0)\nonumber\\
  &=&\alpha^2u_{\tau\tau}(x_0)+\beta^2u_{\nu\nu}(x_0)+2(\xi_0\cdot\nu)\xi'_0\cdot(\phi_zDu-u_lD\nu^l).\nonumber
\end{eqnarray}
By the definition of $v(x_0,\xi_0)$,
\begin{eqnarray}
  v(x_0,\xi_0)&=&\alpha^2v(x_0,\tau)+\beta^2v(x_0,\nu)\nonumber\\
  &\leq&\alpha^2v(x_0,\xi_0)+\beta^2v(x_0,\nu).\nonumber
\end{eqnarray}
Thus,
\begin{eqnarray}
  v(x_0,\xi_0)=v(x_0,\nu),\nonumber
\end{eqnarray}
and
\begin{eqnarray}
  u_{\xi_0\xi_0}(x_0)\leq |u_{\nu\nu}|+C.
\end{eqnarray}
In conclusion, we have (\ref{4.3}) in both cases.
 \end{proof}

First, we denote $d(x)=dist(x,\partial\Omega)$, and define
\begin{eqnarray}
  h(x)=-d(x)+K_3d^{2}(x).\label{h1}
\end{eqnarray}
where $K_3$ is large constant to be determined later. Then we give the following key Lemma.

\begin{lemma}\label{le4.2}
  Suppose $\Omega\subset\mathbb{R}^n$ is a bounded strictly
  convex domain  with $C^2$ boundary.  Denote $\kappa_{max}(x)$ ($\kappa_{min}(x)$) the maximum (minimum) principal curvature at $x\in\partial\Omega$. Let  $u\in C^2(\overline{\Omega})$ is  strictly $(n-1)$-convex
   and $h(x)$ is defined as in (\ref{h1}).
 Then, for any $\gamma\in[\frac{1}{2},1)$, there exists $K_3$, a sufficiently large number depends only on $n$, $m$, $k$, $\gamma$, $\min f$ and  $\Omega$, such that,
  \begin{eqnarray}
    F^{ij}h_{ij}\geq \gamma\kappa_0(1+\mathcal{F}),\quad\text{in}\ \Omega_{\mu}\ (0<\mu\leq\widetilde{\mu}),\label{h3}
  \end{eqnarray}
 where $\kappa_0=\frac{H}{n-1}\geq\kappa_{min}$ and
 $\widetilde{\mu}=\min\{\frac{1}{4K_3},\frac{2-\gamma}{2K_3},\frac{1}{2\kappa_{min}},
 \mu_0\}$, $\mu_0$ is mentioned in (\ref{d}).
 As $\gamma$ tends to 1, $K_3$ tends to infinity.
\end{lemma}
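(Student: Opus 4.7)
The plan is to compute $F^{ij}h_{ij}$ directly, work in the orthonormal principal frame of $\partial\Omega$ at the nearest boundary point to $x$, and use a large $K_{3}$ to absorb error terms into the coercive bound $\gamma\kappa_0(1+\mathcal F)$. Differentiating $h=-d+K_3 d^{2}$ twice I obtain
\[
h_{ij}=(2K_3 d-1)\,d_{ij}+2K_3\,d_i d_j,
\]
and since $\nu=-Dd$ in $\Omega_{\mu_0}$, the factor $d_i d_j=\nu^i\nu^j$ is the rank-one projector onto the outer normal. In the range $d\le 1/(4K_3)$ the scalar $1-2K_3 d\ge 1/2$; combined with positive-definiteness of $(F^{ij})$, which holds because $u$ is strictly $(n-1)$-convex so $\lambda(W)\in\Gamma_n$, this yields
\[
F^{ij}h_{ij}\ge \tfrac12 F^{ij}(-d_{ij})+2K_3\,F^{ij}\nu^i\nu^j.
\]

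Next I would diagonalize $-D^2 d$ in the principal orthonormal frame $\{e_1,\dots,e_{n-1},\nu\}$ at the nearest boundary point to $x$. The parallel-surface formula, valid whenever $d\kappa_\alpha<1$ (ensured by the entry $1/(2\kappa_{\min})$ in $\tilde\mu$), shows that $-D^2 d$ is diagonal in this frame with tangential eigenvalues $\tilde\kappa_\alpha=\kappa_\alpha/(1-d\kappa_\alpha)\ge\kappa_\alpha$ and normal eigenvalue $0$. Writing $F^{\alpha\alpha}$ and $F^{\nu\nu}=F^{ij}\nu^i\nu^j$ for the non-negative diagonal entries of $(F^{ij})$ in this frame (so that $\sum_\alpha F^{\alpha\alpha}+F^{\nu\nu}=\mathcal F$), the previous inequality becomes
\[
F^{ij}h_{ij}\ge \tfrac12\sum_{\alpha=1}^{n-1}\tilde\kappa_\alpha F^{\alpha\alpha}+2K_3 F^{\nu\nu}.
\]

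To close the estimate, the tangential sum would be bounded below by $\gamma\kappa_0(\mathcal F-F^{\nu\nu})$ using $\tilde\kappa_\alpha\ge\kappa_\alpha$, the identity $\sum_\alpha\kappa_\alpha=(n-1)\kappa_0=H$, and the slack afforded by $\gamma<1$; the term $2K_3 F^{\nu\nu}$ dominates $\gamma\kappa_0 F^{\nu\nu}$ once $K_3\ge\gamma\kappa_0/2$; and the additive $\gamma\kappa_0$ in $\gamma\kappa_0(1+\mathcal F)$ is absorbed via the Newton--Maclaurin estimate $\mathcal F=(n-1)S_{n-1}(W)\ge n(n-1)(\min f)^{(n-1)/n}>0$ derived from Proposition \ref{pro4}, which gives $1\le C(n,\min f)\mathcal F$. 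Choosing $K_3$ sufficiently large (depending on $n,\gamma,\min f,\Omega$) and $\tilde\mu$ correspondingly small then closes the inequality, with $K_3\to\infty$ as $\gamma\uparrow 1$ because the slack vanishes. I expect the main obstacle to be precisely the tangential weighted-average bound: since $(F^{\alpha\alpha})$ is generally not diagonal in the principal frame (it is diagonal only in the eigenframe of $D^2 u$), upgrading the crude estimate $\sum\tilde\kappa_\alpha F^{\alpha\alpha}\ge\kappa_{\min}(\mathcal F-F^{\nu\nu})$ to the sharper $\ge\gamma\kappa_0(\mathcal F-F^{\nu\nu})$ demands a matrix-inequality argument exploiting both the factor-$\gamma$ slack and the geometric structure of $\Omega$, and this is where the $\Omega$-dependence of the constant $K_3$ enters.
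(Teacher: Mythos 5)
Your opening steps are correct: $h_{ij}=(2K_3d-1)d_{ij}+2K_3\nu^i\nu^j$, and on $\Omega_{\mu}$ with $\mu\le 1/(4K_3)$ you get
\begin{equation*}
F^{ij}h_{ij}\ge \tfrac12\,F^{ij}(-d_{ij})+2K_3\,F^{ij}\nu^i\nu^j.
\end{equation*}
The difficulty you flag at the end, however, is not a technical annoyance to be pushed through --- it is a genuine obstruction. The inequality $\sum_\alpha \tilde\kappa_\alpha F^{\alpha\alpha}\ge \gamma\kappa_0\sum_\alpha F^{\alpha\alpha}$ is simply false in general: since $\kappa_0=H/(n-1)$ is an average of the principal curvatures, if $(F^{ij})$ places most of its mass on a tangential direction of small curvature (which can certainly happen for a strictly $(n-1)$-convex $u$, because $F^{ii}$ is large precisely when the complementary $(n-1)$-sum of eigenvalues of $D^2u$ is small), then $\sum_\alpha\kappa_\alpha F^{\alpha\alpha}$ is close to $\kappa_{\min}\sum_\alpha F^{\alpha\alpha}$, which is strictly less than $\gamma\kappa_0\sum_\alpha F^{\alpha\alpha}$ whenever $\kappa_{\min}<\gamma\kappa_0$. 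No ``matrix-inequality argument exploiting the geometric structure'' can recover this without imposing extra curvature-pinching hypotheses that Lemma \ref{le4.2} does not have.

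The paper avoids this pointwise estimate altogether. It sets $w=h-\tfrac12\gamma\kappa_0|x|^2$ and works with $\widetilde F=\det(W)^{1/n}$. First it shows $W(D^2w)$ is \emph{positive definite}: in the principal frame $D^2w$ is diagonal with entries $\mu_i-\gamma\kappa_0$ for $i<n$ and $2K_3-\gamma\kappa_0$, and the constraints in $\widetilde\mu$ guarantee $\mu_i>\gamma\kappa_i$, so the smallest eigenvalue of $W(D^2w)$, namely $\sum_{i<n}\mu_i-(n-1)\gamma\kappa_0=\sum_{i<n}(\mu_i-\gamma\kappa_i)$, is positive (bounded below by $(n-1)\delta\kappa_0$), while the other $n-1$ eigenvalues are of size $K_3$. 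Then \emph{concavity} and Minkowski-type superadditivity of $\det^{1/n}$ give
\begin{equation*}
\widetilde F^{ij}w_{ij}\ \ge\ \widetilde F\bigl[D^2u+D^2w\bigr]-\widetilde F\bigl[D^2u\bigr]\ \ge\ \widetilde F\bigl[D^2w\bigr]\ >\ 0,
\end{equation*}
and consequently $\widetilde F^{ij}h_{ij}=\widetilde F^{ij}w_{ij}+\gamma\kappa_0\widetilde{\mathcal F}\ge K_3+\gamma\kappa_0\widetilde{\mathcal F}$. Converting from $\widetilde F$ back to $F$ and using $K_3\gtrsim\gamma\kappa_0\max f^{1/n}/(n\min f)$ yields \eqref{h3}. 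The essential point is that the concavity inequality is frame-free: $\widetilde F[D^2w]$ is computed purely from the eigenvalues of $W(D^2w)$, which are read off in the principal frame where $D^2w$ is diagonal, while $\widetilde F^{ij}$ is diagonal in the (different) eigenframe of $D^2u$ --- and concavity glues these together with no pointwise weighted-average required. Your argument cannot be completed as written; replacing the tangential bound by the convexity/superadditivity step applied to $w$ is the missing idea.
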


\begin{proof}
   For $x_0\in\Omega_{\mu}$, there exists $y_0\in\partial\Omega$ such that $|x_0-y_0|=d(x_0)$. Then, in terms of a principal coordinate system at $y_0$, we have (see \cite{gt}, Lemma 14.17),
  \begin{eqnarray}
    [D^2d(x_0)]=-diag\big[\frac{\kappa_1}{1-\kappa_1d},\cdots,\frac{\kappa_{n-1}}{1-\kappa_{n-1}d},0\big],\label{d2}
  \end{eqnarray}
  and
  \begin{eqnarray}
    Dd(x_0)=-\nu(x_0)=(0,\cdots,0,-1).
  \end{eqnarray}
  Observe that
  \begin{eqnarray}
    [D^2h(x_0)]=diag\big[\frac{((1-2K_3d)\kappa_1}{1-\kappa_1d},\cdots,\frac{(1-2K_3d)
    \kappa_{n-1}}{1-\kappa_{n-1}d},2K_3\big].\label{h2}
  \end{eqnarray}
  Denote $\mu_i=\frac{(1-K_3d)\kappa_i}{1-\kappa_id}>0, \ \forall 1\leq i\leq n-1$, and $\mu_n=2K_3$ for simplicity. Then we define $\lambda(D^2h)=\{\mu_{i_1}+\cdots+\mu_{i_{n-1}}|\ 1\leq i_1<\cdots<i_{n-1}\leq n\}$ and assume $\lambda_1\geq\cdots\geq\lambda_{n-1}\geq\lambda_n$, it is easy to see
  \begin{eqnarray}
    \lambda_{n-1}\geq 2K_3+\sum_{l=1}^{m-1}\mu_{i_l}\geq K_3,\label{4.15}
  \end{eqnarray}
  if we choose $K_3$ sufficiently large and $\mu\leq\frac{1}{4K_3}$. It is also
  easy to see
  that $h$ is strictly convex.

  We now consider the function $w=h-\frac{1}{2}\gamma\kappa_0|x|^2$. As above, we
   define $\widetilde{\mu}(D^2w)=(\widetilde{\mu}_1,\cdots,\widetilde{\mu}_n)$
   the eigenvalues of the Hessian $D^2w$, and $\widetilde{\lambda}=\{\widetilde{\mu}_{i_1}+\cdots+\widetilde{\mu}_{i_{n-1}}|\
   1\leq i_1<\cdots<i_{n-1}\leq n\}$ with $\widetilde{\lambda}_1\geq\cdots\geq\widetilde{\lambda}_n$. For any $\gamma\in
   [\frac{1}{2},1)$, assume
   $\mu\leq\min\{\frac{1}{4K_3},\frac{2-\gamma}{2K_3},\frac{1}{2\kappa_{min}}\}$,
   we have
   \begin{eqnarray}
    \frac{1-K_3d}{1-\kappa_id}
     >\gamma,\quad\forall i=1,2,\cdots,n-1.\nonumber
   \end{eqnarray}
    Set $\delta=\frac{1}{2}(\frac{1-K_3d}{1-\kappa_{min}d}-\gamma)$ independent of $K_3$,
    recalling $H=\sum\limits_{i=1}^{n-1}\kappa_i$,
     it follows that
   \begin{eqnarray}
     \widetilde{\lambda}_{n}&=&\sum_{i=1}^{n-1}\mu_i-(n-1)\gamma\kappa_0\nonumber\\
     &\geq&(n-1)\delta\kappa_0.\label{4.16}
   \end{eqnarray}

  By the concavity of $\widetilde{F}$, we have
  \begin{eqnarray}
    \widetilde{F}^{ij}w_{ij}&\geq&\widetilde{F}[D^2u+D^2w]-\widetilde{F}[D^2u]\nonumber\\
    &\geq&\widetilde{F}[D^2w]\nonumber\\
    &\geq&K_3^{n-1}((n-1)\delta\kappa_0)\nonumber\\
    &\geq& K_3,
  \end{eqnarray}
  for a large enough $K_3\geq\frac{1}{(n-1)\delta\kappa_0}$.
  Then we get
  \begin{eqnarray}
    \widetilde{F}^{ij}h_{ij}=\widetilde{F}^{ij}(h-\frac{1}{2}\gamma\kappa_0|x|^2
    +\frac{1}{2}\gamma\kappa_0|x|^2)_{ij}\geq K_3+\gamma\kappa_0
    \widetilde{\mathcal{F}}.
  \end{eqnarray}
  If we choose $K_3\geq \frac{\gamma\kappa_0\max f^{\frac{1}{n}}}{n\min f}$,
  then we have
  \begin{eqnarray}
    F^{ij}h_{ij}\geq \gamma\kappa_0(1+\mathcal{F}).
  \end{eqnarray}
\end{proof}

Following the line of Qiu-Ma \cite{mq} and Chen-Zhang \cite{cz},
 we construct the sub barrier function as
\begin{eqnarray}
  P(x)=g(x)(Du\cdot \nu-\phi(x,u))-G(x).
\end{eqnarray}
with
\begin{eqnarray}
\nu(x)&=&-Dd(x),\nonumber\\
  g(x)&=&1-\beta h(x),\nonumber\\
  G(x)&=&(A+\sigma N)h(x),\nonumber
\end{eqnarray}
where$A$, $\sigma$,
 and $\beta$ are positive constants to be determined.
We have the following lemma.
\begin{lemma}\label{le4.3}
  Fix $\sigma$, if we select $\beta$ large, $\mu$ small, $A$ large,
  and assume $N$ large, then
  \begin{eqnarray}
    P\geq0,\quad\text{in}\quad\Omega_{\mu}.
  \end{eqnarray}
  Furthermore, we have
  \begin{eqnarray}\label{4.21}
    \sup_{\partial \Omega}u_{\nu\nu}\leq C+\sigma N,
  \end{eqnarray}
  where constant $C$ depends  only on $|u|_{C^{1}}$, $|\partial\Omega|_{C^{2}}$ $|f|_{C^2}$ and $|\phi|_{C^{2}}$.
\end{lemma}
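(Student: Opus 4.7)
The plan is to prove $P\ge 0$ throughout $\Omega_\mu$ by a maximum-principle argument, and then read off the upper bound on $u_{\nu\nu}$ from the fact that $P$ vanishes on $\partial\Omega$ while being nonnegative inside, which forces $P_\nu\le 0$ there. First I would verify the boundary values of $P$ on $\partial\Omega_\mu$: on $\partial\Omega$ itself one has $h=0$, hence $g=1$ and $G=0$, and the Neumann condition gives $P\equiv 0$; on the inner portion $\{d=\mu\}$, the choice $\mu\le 1/(4K_3)$ (permitted by Lemma~\ref{le4.2}) guarantees $h\le-\mu/2<0$, so $-G=(A+\sigma N)|h|\ge(A+\sigma N)\mu/2$ dominates the bounded quantity $g(Du\cdot\nu-\phi)$ once $A$ is chosen large (independently of $N$). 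Thus $P\ge 0$ on $\partial\Omega_\mu$.

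The bulk of the work is to rule out any negative interior minimum by showing $F^{ij}P_{ij}>0$ at such a hypothetical point. Setting $\ell:=Du\cdot\nu-\phi(x,u)$ and expanding $P=g\ell-G$ gives
\begin{equation*}
F^{ij}P_{ij}=gF^{ij}\ell_{ij}-\beta\ell\,F^{ij}h_{ij}-2\beta F^{ij}h_i\ell_j-(A+\sigma N)F^{ij}h_{ij}.
\end{equation*}
Computing $F^{ij}\ell_{ij}$ with $\nu=-Dd$, replacing $F^{ij}u_{ijk}$ by $f_{x_k}+f_zu_k$ (from differentiating the PDE) and $F^{ij}u_{ij}$ by $nf$ (Euler identity for the $n$-homogeneous operator $\det W$), the only nontrivial contribution is the mixed second-order piece $2F^{ij}u_{ki}\nu^k_j=-2F^{ij}u_{ki}d_{kj}$, bounded via Lemma~\ref{le4.1} by $C(1+N)\mathcal F$. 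Since $\ell$ vanishes on $\partial\Omega$, one has $|\ell|\le C|h|\le C\mu$ in $\Omega_\mu$, so $|\beta\ell\,F^{ij}h_{ij}|\le C\beta\mu(1+\mathcal F)$. The coercive term $-(A+\sigma N)F^{ij}h_{ij}$ is bounded above by $-(A+\sigma N)\gamma\kappa_0(1+\mathcal F)$ via Lemma~\ref{le4.2}.

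The main obstacle is the cross term $-2\beta F^{ij}h_i\ell_j$: a naive Cauchy--Schwarz bound produces $C\beta(1+N)\mathcal F$, which cannot be absorbed by $(A+\sigma N)\mathcal F$ when $\sigma$ is fixed but $\beta$ is to be chosen large. I would sidestep this by exploiting the first-order condition at the hypothetical interior minimum $x_0$: from $P_i(x_0)=0$ one gets the algebraic identity $g\ell_i=[(A+\sigma N)+\beta\ell]h_i$, which converts the cross term into
\begin{equation*}
-2\beta F^{ij}h_i\ell_j=-2\beta g^{-1}[(A+\sigma N)+\beta\ell]F^{ij}h_ih_j,
\end{equation*}
a quantity of the correct sign (nonpositive) as long as $(A+\sigma N)+\beta\ell>0$, which holds since $|\ell|\le C\mu$ and $A$ is taken large relative to $C\beta\mu$. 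Collecting these estimates and, in the order prescribed by the statement, choosing $\beta$ large, then $\mu$ small so that $C\beta\mu$ stays bounded, then $A$ large enough to dominate the bounded residuals, one finds that for $N$ sufficiently large the coercive term $(A+\sigma N)\gamma\kappa_0(1+\mathcal F)$ outweighs the $C(1+N)(1+\mathcal F)$ contribution coming from $gF^{ij}\ell_{ij}$, yielding $F^{ij}P_{ij}(x_0)>0$ in contradiction with the second-order condition $F^{ij}P_{ij}(x_0)\ge 0$ at a minimum. Hence $P\ge 0$ in $\Omega_\mu$.

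Finally, with $P\ge 0$ in $\Omega_\mu$ and $P=0$ on $\partial\Omega$, the inward normal derivative of $P$ is nonnegative at each boundary point, i.e.\ $P_\nu\le 0$. A short computation using $h_\nu=1$ on $\partial\Omega$, the identity $\sum_i\nu^k_i\nu^i=0$ from (\ref{d}), and $u_\nu=\phi$ reduces $P_\nu\le 0$ to $u_{\nu\nu}\le(A+\sigma N)+\phi_{x_i}\nu^i+\phi_z\phi$, which is exactly (\ref{4.21}) with $C$ depending only on $A$, $|u|_{C^1}$, $|\partial\Omega|_{C^2}$, $|f|_{C^2}$ and $|\phi|_{C^2}$.
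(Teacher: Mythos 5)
Your key step---using the first-order condition $P_i(x_0)=0$ to rewrite $-2\beta F^{ij}h_i\ell_j=-\tfrac{2\beta}{g}\bigl[(A+\sigma N)+\beta\ell\bigr]F^{ij}h_ih_j$---is a correct and elegant identity, but you then discard this term because it has "the correct sign," and your remaining estimate $gF^{ij}\ell_{ij}\leq C(1+N)\mathcal{F}$ is too crude to close the argument. The problem is quantitative: $\sigma$ is held \emph{fixed} (ultimately $\sigma=\tfrac12$), so in the comparison of leading orders in $N$ you need $\sigma\gamma\kappa_0>C$, where $C$ absorbs, among other things, the factor $2C_0\kappa_{\max}$ coming from $|D^2u|\leq C_0(1+N)$ in the term $2gF^{ii}u_{ii}D_i\nu^i$. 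That inequality will generically fail, and no choice of $\beta$, $\mu$, $A$, $N$ in the order you propose can fix it, because increasing $N$ scales both sides linearly. A further symptom of the gap is that your argument never invokes the curvature pinching hypothesis $\kappa_{\max}-\kappa_{\min}<\tfrac{\gamma H}{2(n-1)(n-2)}$, which the paper uses essentially.

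The paper's proof keeps the cross term and extracts from it the strong negative contribution $-\tfrac{\beta}{2n}F^{i_0i_0}u_{i_0i_0}$ for some index $i_0$ with $d_{i_0}^2\geq\tfrac1n$; the first-order condition is used not to make the cross term merely nonpositive but to pin down $u_{ii}\approx A+\sigma N$ for all $i$ in the "good" set $G=\{i:\beta d_i^2\geq\epsilon\kappa_{\min}\}$, so that this negative term is of size $\beta N F^{i_0i_0}$. The term $2gF^{ii}u_{ii}D_i\nu^i$ is then split by the sign of $u_{ii}$, exploiting that $D_i\nu^i\in[\kappa_{\min},\kappa_{\max}]$ for $i<n$ and $D_n\nu^n=0$, and a case analysis on the eigenvalue distribution (whether all $u_{ii}\geq0$, whether $\lambda_n$ is small or large compared to $\epsilon N$, etc.) shows that either $F^{i_0i_0}\gtrsim\mathcal{F}$ (so the negative cross contribution dominates) or the positive part of $\sum F^{ii}u_{ii}D_i\nu^i$ is bounded by $nf$ via the Euler identity, and the pinching condition controls the residual $(\kappa_{\max}-\kappa_{\min})\sigma N\mathcal{F}$ against $\gamma\kappa_0\sigma N\mathcal{F}$. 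None of this structure survives the blanket bound $C(1+N)\mathcal{F}$, so the proposal as written does not establish the interior contradiction. The boundary-value verification on $\partial\Omega_\mu$ and the final Hopf-lemma step are fine.
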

\begin{proof}
  We assume $P(x)$ attains its minimum point $x_{0}$ in the interior of $\Omega_{\mu}$. Differentiate $P$ twice to  obtain
  \begin{eqnarray}
    P_{i}=g_{i}(u_{l}\nu^{l}-\phi)+g(u_{li}\nu^{l}+u_{l}D_i\nu^l-D_i\phi)-G_{i},
  \end{eqnarray}
  and
  \begin{eqnarray}
    P_{ij}&=&g_{ij}(u_{l}\nu^{l}+\phi)+g_{i}(u_{lj}\nu_{l}+u_{l}D_j\nu^l-D_j\phi)\\
    &&+g_{j}(u_{li}\nu^{l}+u_{l}D_i\nu^l-D_i\phi)+g(u_{lij}\nu^{l}+u_{li}D_j\nu^l\nonumber\\
    &&+u_{lj}D_i\nu^l+u_{l}D_{ij}\nu^l-D_{ij}\phi)-G_{ij}.\nonumber
  \end{eqnarray}

  By a rotation of coordinates, we may assume that $(u_{ij})_{n\times n}$ is diagonal at $x_{0}$, so are $W$ and $(F^{ij})_{n\times n}$.
  %Denote $\mathcal{F}=\sum\limits_{i=1}^{n}F^{ii}$ the trace of $(F^{ij})_{n\times n}$.

  We choose $\mu<\min\{\widetilde{\mu},\frac{2\epsilon}{\beta},\frac{\epsilon}{2K_3}\}$, where $\widetilde{\mu}$ is defined in Lemma \ref{le4.2} and $\epsilon\in(0,\frac{1}{2})$
  is a small positive number to be determined
   , such that $|\beta h|\leq \beta\frac{\mu}{2}\leq\epsilon$. It follows that
  \begin{eqnarray}\label{4.17}
    1\leq g\leq1+\epsilon.
  \end{eqnarray}
  Remember that $h_i=-(1-2K_3d)d_i$, we also have
  \begin{eqnarray}
    (1-\epsilon)|d_i|\leq|h_i|\leq|d_i|.\label{4.19}
  \end{eqnarray}

  By a straight computation, using Lemma \ref{le4.2}, we obtain
  \begin{eqnarray}\label{4.18}
    F^{ij}P_{ij}&=&F^{ii}g_{ii}(u_{l}\nu^{l}-\phi)+2F^{ii}g_{i}(u_{ii}\nu^{i}+u_{l}D_i\nu^l
    -D_i\phi)\nonumber\\
     &&+g F^{ii}(u_{lii}\nu^{l}+2u_{ii}D_i\nu^{i}+u_{l}D_{ii}\nu^l-D_{ii}\phi)-(A+\sigma N)F^{ii}h_{ii} \nonumber\\
    &\leq&\big( \beta C_1-(A+\sigma N)\gamma\kappa_0\big)(\mathcal{F}+1)\\
   && -2\beta F^{ii}u_{ii}h_{i}\nu^i+2gF^{ii}u_{ii}D_i\nu^{i},\nonumber
  \end{eqnarray}
where $C_1=C_{1}(|u|_{C^{1}}, |\partial \Omega|_{C^{3}}, |\phi|_{C^{2}}, |f|_{C^{1}}, n)$.

  We divide indexes $I=\{1, 2, \cdots, n\}$ into two sets in the following way,
  \begin{eqnarray}
    B=\{i\in I | |\beta d_{i}^{2}|<\epsilon\kappa_{min}\},\nonumber\\
    G =I\backslash B =\{i\in I | |\beta d_{i}^{2}|\geq\epsilon\kappa_{min}\},\nonumber
  \end{eqnarray}
  where $\kappa_{min}$ ($\kappa_{max}$) is the minimum (maximum)
  principal curvature of the boundary.
  For $i\in G$, by $P_{i}(x_{0})=0$, we get
  \begin{eqnarray}
    u_{ii}=(1-2K_3d)[\frac{(A+\sigma N)}{g}+\frac{\beta(u_{l}\nu^l-\phi)}
    {g}]+\frac{u_{l}D_i\nu^l-D_i\phi}{d_{i}}.
  \end{eqnarray}
  Because $|d_{i}^{2}|\geq\frac{\epsilon\kappa_{min}}{\beta}$, (\ref{4.17})
  and (\ref{4.19}), we have
  \begin{eqnarray}
    |\frac{(1-2K_3d)\beta(u_l\nu^l-\phi)}{g}+\frac{u_{l}D_i\nu^l-D_i\phi}{d_{i}}|\leq\beta C_{2}(\epsilon^{-1}, |u|_{C^{1}},
    |\partial \Omega|_{C^{2}}, |\psi|_{C^{1}}).\nonumber
  \end{eqnarray}
  Then let $A\geq3\beta C_{2}$, we have
  \begin{eqnarray}
    \frac{A}{3}+\frac{1-\epsilon}{1+\epsilon}\sigma N
    \leq u_{ii}\leq \frac{4A}{3}+\sigma N,\label{4.20}
  \end{eqnarray}
  for $\forall i\in G$. We choose $\beta\geq2n\epsilon\kappa_{min}+1$ to let $|d_{i}^{2}|\leq\frac{1}{2n}$ for $i\in B$. Because $|Dd|=1$, there is a $i_{0}\in G$, say $i_{0}=1$, such that
  \begin{eqnarray}
    d_{1}^{2}\geq \frac{1}{n}.\label{4.22}
  \end{eqnarray}

  We have
  \begin{eqnarray}\label{4.30}
    -2\beta \sum_{i\in I}F^{ii}u_{ii}h_{i}\nu^i&=&-2\beta\sum_{i\in G}F^{ii}u_{ii}h_{i}\nu^i-2\beta \sum_{i\in B}F^{ii}u_{ii}h_{i}\nu^i\\
    &\leq& -2(1-\epsilon)\beta F^{11}u_{11}d_{1}^{2}-2\beta\sum_{i\in B,
    u_{ii}<0}F^{ii}u_{ii}d_{i}^{2}\nonumber\\
    &\leq&-\frac{\beta F^{11}u_{11}}{n}-2\epsilon
    \kappa_{min}\sum_{u_{ii}<0}F^{ii}u_{ii}.\nonumber
  \end{eqnarray}
  and
  \begin{eqnarray}\label{4.31}
    2g\sum_{i\in I}F^{ii}u_{ii}D_i\nu^i&=&2g\sum_{u_{ii}\geq 0}F^{ii}u_{ii}D_i\nu^i+2g\sum_{u_{ii}<0}F^{ii}u_{ii}D_i\nu^i\\
    &\leq&2\kappa_{max}\sum_{u_{ii}\geq0}F^{ii}u_{ii}
    +2\kappa_{min}\sum_{u_{ii}<0}F^{ii}u_{ii}.\nonumber
  \end{eqnarray}
%  where $k_1\geq k_2$ a constant depends only on $\partial\Omega|_{C^2}$.
  Plug (\ref{4.30}) and (\ref{4.31}) into (\ref{4.18}) to get
  \begin{eqnarray}\label{4.32}
    F^{ii}P_{ij}&\leq&\big(\beta C_{1}-(A+\sigma N)\gamma\kappa_0\big)(\mathcal{F}+1)-\frac{\beta}{2n}F^{11}u_{11}\nonumber\\
    &&+2(1-\epsilon)\kappa_{min}\sum_{u_{ii}<0}F^{ii}u_{ii}
    +2\kappa_{max}\sum_{u_{ii}\geq0}F^{ii}u_{ii}.
  \end{eqnarray}

  Denote $u_{22}\geq\cdots\geq u_{nn}$, and
  %$\mu_i=u_{ii}\ (1\leq i\leq n)$ for simplicity. We also denote
  \begin{eqnarray}
    \lambda_{1}&=&\max\limits_{1\in\overline{\alpha}}\{w_{\overline{\alpha}\overline{\alpha}}\}
   =\mu_1+\sum_{i=2}^{n-1}\mu_i,\nonumber\\
    \lambda_{m_1}&=&\min\limits_{1\in\overline{\alpha}}\{w_{\overline{\alpha}\overline{\alpha}}\}=
     u_{11}+\sum_{i=3}^{n}u_{ii}.\nonumber
  \end{eqnarray}
   and $\lambda_{2}\geq\cdots\geq\lambda_{n}>0$ the eigenvalues of  the matrix $W$.
   Assume $N>1$, from (\ref{4.1}) we see that
  \begin{eqnarray}\label{4.34}
    u_{ii}\leq 2C_{0}N,\quad \forall i\in I.
  \end{eqnarray}
  Then
  \begin{eqnarray}\label{4.35}
    \lambda_{i}\leq2(n-1)C_{0}N,\quad \forall 1\leq i\leq C_{n}^{m}.
  \end{eqnarray}

\medskip

  If $u_{11}\leq u_{22}$, we see that $\lambda_{m_1}=\lambda_n$.
  Then
  \begin{eqnarray}
    F^{11}>S_{n-1}(\lambda|n)\geq\frac{1}{n(n-1)}\mathcal{F},
  \end{eqnarray}
  it follows that
  \begin{eqnarray}
    F^{ij}P_{ij}&\leq&\big(\beta C_{1}-(A+\sigma N)\gamma\kappa_0\big)(\mathcal{F}+1)
    +2C_{0}\kappa_{max}N\mathcal{F}\nonumber\\
    &&-\frac{\beta}{2n^2(n-1)}(\frac{A}{3}+\frac{1-\epsilon}{1+\epsilon}\sigma N)\mathcal{F}\nonumber\\
    &<&0.
  \end{eqnarray}
  if we choose $\beta>\frac{12n^2(n-1)\kappa_{max}C_{0}}{\sigma}$ and $A>\frac{\beta C_{1}}{\gamma\kappa_0}$.

  In the following cases, we always assume $u_{11}>u_{22}$.

\medskip

  $\mathbf{Case 1}$. $u_{nn}\geq0$.

  It follows from
  \begin{eqnarray}
    kf=\sum_{i=1}^{n}F^{ii}u_{ii}=\sum_{u_{ii}\geq0}F^{ii}u_{ii}\nonumber
  \end{eqnarray}
  and (\ref{4.32}) that
  \begin{eqnarray}
    F^{ij}P_{ij}\leq \big(\beta C_{1}-(A+\sigma N)
    \gamma\kappa_0\big)(\mathcal{F}+1)+2\kappa_{max}kf<0,
  \end{eqnarray}
  if we choose $A>\frac{\beta C_{1}+2\kappa_{max}k\max f}{\gamma\kappa_0}$.

\medskip

  $\mathbf{Case 2}$.  $B=\sum\limits_{u_{ii}<0}u_{ii}>-(n-2)\sigma N-\epsilon N$
   and
   $\lambda_n\leq\epsilon N$.

  It follows from
  \begin{eqnarray*}
    \lambda_n=\sum_{i=2}^nu_{ii},
  \end{eqnarray*}that
  \begin{eqnarray}
   && 2\kappa_{max}\sum_{u_{ii}\geq0}F^{ii}u_{ii}
    +(2-\epsilon)\kappa_{min}\sum_{u_{ii}<0}F^{ii}u_{ii}\nonumber\\
    &\leq& 2\kappa_{max}F^{11}u_{11}+
    2[\kappa_{max}(\epsilon N-B)
    +(1-\epsilon)\kappa_{min}B]\mathcal{F}
    \nonumber\\&\leq&
    2(n-2)
    [\kappa_{max}-(1-\epsilon)\kappa_{min}]\sigma N\mathcal{F}+
    4\epsilon\kappa_{max} N\mathcal{F}\nonumber\\&&
  + 2\kappa_{max}F^{11}u_{11}.
  \end{eqnarray}
  Since $\kappa_{max}-\kappa_{min}<\frac{\gamma H}{2(n-1)(n-2)}$, we have
  \begin{eqnarray}
    (n-1)\gamma\kappa_0=\gamma H\kappa_{min})>2(n-1)(n-2)
    (\kappa_{max}-\kappa_{min}).
  \end{eqnarray}
  We can choose a sufficiently small $\epsilon=\epsilon(n,\gamma,\kappa_{max},\kappa_{min})$
  to get
  \begin{eqnarray}
     2\kappa_{max}\sum_{u_{ii}\geq0}F^{ii}u_{ii}
    +(2-\epsilon)\kappa_{min}\sum_{u_{ii}<0}F^{ii}u_{ii}
    \leq\gamma\kappa_0\sigma N\mathcal{F}+2\kappa_{max}F^{11}u_{11}.\nonumber
  \end{eqnarray}
 We now choose $A>\beta C_{1}+1$ and $\beta\geq 4n\kappa_{max}$ to get
  \begin{eqnarray}
    F^{ij}P_{ij}<0.
  \end{eqnarray}

\medskip

  $\mathbf{Case 3}$. $B=\sum\limits_{u_{ii}<0}u_{ii}>-(n-2)\sigma N-\epsilon N$ and
   $\lambda_n>\epsilon N$.

It is easy to see, by (\ref{4.35}), that,
\begin{eqnarray}
  F^{11}&>&S_{n-1}(\lambda|1)=\lambda_2\cdots\lambda_n\nonumber\\
  &\geq&\epsilon^{n-1}N^{n-1}=(\frac{\epsilon}{2(n-1)C_0})^{n-1}[2(n-1)C_0N]^{n-1}\nonumber\\
  &\geq&\frac{1}{n}(\frac{\epsilon}{2(n-1)C_0})^{n-1}S_{n-1}(\lambda).
\end{eqnarray}
Similarly, if we choose $\beta>\frac{2^{n+1}3n^2(n-1)^{n}\kappa_{max}(C_0)^n}{\sigma
\epsilon^{n-1}}$ and $A>\frac{\beta C_1}{\gamma\kappa_0}$, then
\begin{eqnarray}
  F^{ij}P_{ij}<0.
\end{eqnarray}

\medskip

\textbf{Case4.} $B=\sum\limits_{u_{ii}<0}u_{ii}\leq-(n-2)\sigma N-\epsilon N$.

We have
\begin{eqnarray}
  \lambda_n=u_{22}+\sum_{i=3}^nu_{ii}>0.\nonumber
\end{eqnarray}
It follows that
\begin{eqnarray}
  u_{22}\geq\frac{|B|}{n-2}\geq(\sigma +\frac{\epsilon}{n-2})N>u_{11},\nonumber
\end{eqnarray}
if we assume $N>\frac{4(n-2)A}{3\epsilon}$. This contradicts to that $u_{11}>u_{22}$.

In conclusion, we choose a small $\epsilon=\epsilon(n,\gamma,\kappa_{max},\kappa_{min})$,
\begin{eqnarray}
  \beta=\max\{4n\kappa_{max}+1,\frac{2^{n+1}3n^2(n-1)^{n}\kappa_{max}(C_0)^n}{\sigma
\epsilon^{n-1}}\}.\nonumber
\end{eqnarray}
 and $\mu=\min\{\widetilde{\mu},\frac{2\epsilon}{\beta},\frac{\epsilon}{2K_3}\}$.
 If $A>\max \{3\beta C_{2},\frac{\beta C_{1}+2\kappa_{max}k\max f}{\gamma\kappa_0}\}$
 and $N>\frac{4(n-2)A}{3\epsilon}$,
 we obtain $F^{ii}P_{ij}<0$, which contradicts to
  that $P$ attains its minimum in the interior of $\Omega_{\mu}$.
   This implies that $P$ attains its minimum on the boundary $\partial \Omega_{\mu}$.

On $\partial \Omega$, it is easy to see
\begin{eqnarray}
  P=0.
\end{eqnarray}
On $\partial\Omega_{\mu}\cap\Omega$, we have
\begin{eqnarray}
  P\geq-C_{3}(|u|_{C^{1}}, |\phi|_{C^{0}})+(A+\sigma N)\frac{\mu}{2}\geq0,
\end{eqnarray}
if we take $A=\max \{\frac{2C_{3}}{\mu},3\beta C_{2},
\frac{\beta C_{1}+2\kappa_{max}k\max f}{\gamma\kappa_0}\}$. Finally the maximum principle tells us that
\begin{eqnarray}
  P\geq0,\quad\text{in}\quad\Omega_{\mu}.
\end{eqnarray}

Suppose $u_{\nu\nu}(y_{0})=\sup_{\partial\Omega}u_{\nu\nu}>0$, we have
\begin{eqnarray}
  0&\geq&P_{\nu}(y_{0})\nonumber\\
  &\geq&(u_{\nu\nu}+u_lD_i\nu^l\nu^i-D_{\nu}\phi)-(A+\sigma N)h_{\nu}\nonumber\\
  &\geq&u_{\nu\nu}(y_{0})-C(|u|_{C^{1}}, |\partial\Omega|_{C^{2}}, |\phi|_{C^{2}})-(A+\sigma N).\nonumber
\end{eqnarray}
Then we get
\begin{eqnarray}
  \sup_{\partial\Omega}u_{\nu\nu}\leq C+\sigma N.
\end{eqnarray}
\end{proof}

 In a similar way, we construct the super barrier function as
 \begin{eqnarray}
  \overline{P}(x):=g(x)(Du\cdot \nu-\phi(x))+G(x).
\end{eqnarray}
We also have the following lemma.
\begin{lemma}\label{le4.4}
 Fix $\sigma$, if we select $\beta$ large, $\mu$ small, $A$ large,
  %and assume $N$ large,
   then
  \begin{eqnarray}
    \overline{P}\leq0,\quad\text{in}\quad\Omega_{\mu}.
  \end{eqnarray}
  Furthermore, we have
  \begin{eqnarray}\label{4.36}
    \inf_{\partial \Omega}u_{\nu\nu}\geq -C-\sigma N,
  \end{eqnarray}
  where constant $C$ depends on $|u|_{C^{1}}$, $|\partial\Omega|_{C^{2}}$ $|f|_{C^2}$ and $|\phi|_{C^{2}}$.
\end{lemma}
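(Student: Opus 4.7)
The plan is to mirror Lemma~\ref{le4.3}, flipping the sign of $G$ to turn the sub-barrier into a super-barrier. I would first verify the boundary values of $\overline{P}$. On $\partial\Omega$ the Neumann condition gives $Du\cdot\nu - \phi = 0$ and $h = 0$, so $\overline{P}\equiv 0$. On $\partial\Omega_\mu\cap\Omega$ (where $d=\mu$) we have $h \leq -\mu/2$ once $\mu \leq 1/(2K_3)$, hence $G \leq -(A+\sigma N)\mu/2$; since $|g(Du\cdot\nu - \phi)|$ is bounded by a constant depending only on $|u|_{C^1}$ and $|\phi|_{C^0}$, taking $A$ large enough forces $\overline{P}\leq 0$ on this inner face. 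The heart of the argument is then to rule out an interior positive maximum via the maximum principle.

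At an interior maximum $x_0 \in \Omega_\mu$, I would diagonalize $(u_{ij})$ and compute $F^{ij}\overline{P}_{ij}$ exactly as in Lemma~\ref{le4.3}. The sole structural change is that $+F^{ii}G_{ii} = (A+\sigma N)F^{ii}h_{ii}$ now enters with the favourable sign, so Lemma~\ref{le4.2} yields
\[
F^{ii}\overline{P}_{ii} \geq \bigl((A+\sigma N)\gamma\kappa_0 - \beta C_1\bigr)(\mathcal{F}+1) - 2\beta F^{ii}u_{ii}h_i\nu^i + 2gF^{ii}u_{ii}D_i\nu^i.
\]
The stationarity condition $\overline{P}_i(x_0) = 0$ then reads, for each index in the ``good'' set $G = \{i : |\beta d_i^2| \geq \epsilon\kappa_{min}\}$,
\[
u_{ii} = (1-2K_3 d)\Bigl[-\tfrac{A+\sigma N}{g} + \tfrac{\beta(u_l\nu^l - \phi)}{g}\Bigr] + \tfrac{u_l D_i\nu^l - D_i\phi}{d_i},
\]
so $u_{ii} \approx -(A+\sigma N)$ at those indices; in particular at $i_0 = 1$ with $d_1^2 \geq 1/n$ we have $u_{11}$ large and negative.

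I would then run the same four-case split as in Lemma~\ref{le4.3}, governed by the ordering $u_{11}$ vs.\ $u_{22}$, the sign of $u_{nn}$, the size of $\sum_{u_{ii}<0}u_{ii}$, and the size of $\lambda_n$. In every case the positive term $(A+\sigma N)\gamma\kappa_0(\mathcal{F}+1)$ should dominate: the cross term $-2\beta F^{11}u_{11}h_1\nu^1$ now has a favourable sign because $u_{11}<0$, providing an extra positive contribution of order $\beta F^{11}(A+\sigma N)/n$, while the remaining $D_i\nu^i$ cross term is controlled after swapping the roles of $\kappa_{max}$ and $\kappa_{min}$ via the geometric hypothesis $\kappa_{max} - \kappa_{min} < \frac{\gamma H}{2(n-1)(n-2)}$. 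With the same hierarchical choices of $\beta, A$ large and $\mu, \epsilon$ small, this yields $F^{ij}\overline{P}_{ij} > 0$, contradicting the interior maximum. Hence $\overline{P}\leq 0$ in $\Omega_\mu$.

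To conclude~\eqref{4.36}, I would apply a Hopf-type argument at $y_0 \in \partial\Omega$ where $u_{\nu\nu}$ attains its infimum. Since $\overline{P}(y_0) = 0$ and $\overline{P}\leq 0$ on $\Omega_\mu$, the outward normal derivative satisfies $\overline{P}_\nu(y_0) \geq 0$. Using the Neumann condition to eliminate $Du\cdot\nu - \phi$, the identity $\sum_p\nu^p D_p\nu^l = 0$ from~\eqref{d} to kill $u_l D_\nu\nu^l$, together with $h_\nu = 1$ and $g = 1$ on $\partial\Omega$, a direct calculation reduces this to
\[
0 \leq u_{\nu\nu}(y_0) + (A + \sigma N) + O(1),
\]
which gives $\inf_{\partial\Omega} u_{\nu\nu} \geq -C - \sigma N$. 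The main obstacle is the interior case analysis: it parallels Lemma~\ref{le4.3} mechanically but every sign must be tracked carefully, and the geometric restriction on $\kappa_{max} - \kappa_{min}$ is again exactly what is needed to absorb the curvature cross-term once its use of $\kappa_{max}$ and $\kappa_{min}$ is interchanged.
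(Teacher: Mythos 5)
Your boundary-value check and the final Hopf computation are fine, and the overall strategy (super-barrier, maximum principle) matches the paper. But your description of the interior case analysis is off, and this is the substantive gap: the super-barrier proof is structurally \emph{simpler} than that of Lemma~\ref{le4.3}, not a sign-flipped clone of it.

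The stationarity condition $\overline{P}_i(x_0)=0$ for the good index $i_0=1$ (with $d_1^2\geq 1/n$) gives $u_{11}\leq -\tfrac{A}{3}-\tfrac{1-\epsilon}{1+\epsilon}\sigma N<0$, i.e.\ $u_{11}$ is forced to be \emph{large and negative}. Since the solution is strictly $(n-1)$-convex, every sum of $n-1$ diagonal entries of $D^2u$ is positive; in particular $u_{22}+\cdots+u_{nn}>0$, which forces $u_{22}\geq 0>u_{11}$. So the dichotomy that drove Lemma~\ref{le4.3}'s case split is resolved automatically: the branch $u_{11}>u_{22}$ (and hence all four sub-cases based on $u_{nn}$, $B=\sum_{u_{ii}<0}u_{ii}$, and $\lambda_n$) cannot occur. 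Only the first, easy branch survives: $\lambda_{m_1}=u_{11}+\sum_{i=3}^n u_{ii}=\lambda_n$, whence $F^{11}>S_{n-1}(\lambda|n)\geq\tfrac{1}{n(n-1)}\mathcal{F}$, and the positive term $-\tfrac{\beta}{2n}F^{11}u_{11}\geq\tfrac{\beta}{2n^2(n-1)}(\tfrac{A}{3}+\tfrac{1-\epsilon}{1+\epsilon}\sigma N)\mathcal{F}$ overwhelms $2C_0\kappa_{max}N\mathcal{F}$ for $\beta$ large. Consequently the geometric hypothesis $\kappa_{max}-\kappa_{min}<\tfrac{\gamma H}{2(n-1)(n-2)}$ is \emph{not needed} for Lemma~\ref{le4.4} — note that the lemma statement does not assume it — whereas your proposal relies on it to absorb the curvature cross-term "once its use of $\kappa_{max}$ and $\kappa_{min}$ is interchanged." You should recognize that the negativity of $u_{11}$ collapses the case analysis rather than merely changing signs within it.
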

\begin{proof}
   We assume $\overline{P}(x)$ attains its maximum point $x_{0}$ in the interior of $\Omega_{\mu}$. Differentiate $\overline{P}$ twice to  obtain
  \begin{eqnarray}
    \overline{P}_{i}=g_{i}(u_{l}\nu^{l}-\phi)+g(u_{li}\nu^{l}+u_{l}D_i\nu^l-D_i\phi)+G_{i},
  \end{eqnarray}
  and
  \begin{eqnarray}
   \overline{P}_{ij}&=&g_{ij}(u_{l}\nu^{l}+\phi)+g_{i}(u_{lj}\nu_{l}+u_{l}D_j\nu^l-D_j\phi)\\
    &&+g_{j}(u_{li}\nu^{l}+u_{l}D_i\nu^l-D_i\phi)+g(u_{lij}\nu^{l}+u_{li}D_j\nu^l\nonumber\\
    &&+u_{lj}D_i\nu^l+u_{l}D_{ij}\nu^l-D_{ij}\phi)+G_{ij}.\nonumber
  \end{eqnarray}

  As before we assume that $(u_{ij})$ is diagonal at $x_{0}$, so are $W$ and $(F_{ij})$.

  We choose $\mu=\min\{\widetilde{\mu},\frac{2\epsilon}{\beta},\frac{\epsilon}{2K_3}\}$, where $\widetilde{\mu}$ is defined in Lemma \ref{le4.2} and $\epsilon\in(0,\frac{1}{2})$
  is a small positive number to be determined
   , such that $|\beta h|\leq \beta\frac{\mu}{2}\leq\epsilon$. It follows that
  \begin{eqnarray}\label{4.37}
    1\leq g\leq1+\epsilon.
  \end{eqnarray}
  Remember that $h_i=-(1-2K_3d)d_i$, we also have
  \begin{eqnarray}
    (1-\epsilon)|d_i|\leq|h_i|\leq|d_i|.\label{4.38}
  \end{eqnarray}
  By a straight computation, using Lemma \ref{4.2}, we obtain
  \begin{eqnarray}\label{4.39}
    F^{ij}\overline{P}_{ij}&=&F^{ii}g_{ii}(u_{l}\nu^{l}-\phi)+2F^{ii}g_{i}(u_{ii}\nu^{i}+u_{l}D_i\nu^l
    -D_i\phi)\nonumber\\
     &&+g F^{ii}(u_{lii}\nu^{l}+2u_{ii}D_i\nu^{i}+u_{l}D_{ii}\nu^l-D_{ii}\phi)+(A+\sigma N)F^{ii}h_{ii} \nonumber\\
    &\geq&\big((A+\sigma N)\gamma\kappa_0-\beta C_1\big)(\mathcal{F}+1)\\
   && -2\beta F^{ii}u_{ii}h_{i}\nu^i+2gF^{ii}u_{ii}D_i\nu^{i},\nonumber
  \end{eqnarray}
where $C_1=C_{1}(|u|_{C^{1}}, |\partial \Omega|_{C^{31}}, |\phi|_{C^{2}}, |f|_{C^{1}}, n)$.

  We divide indexes $I=\{1, 2, \cdots, n\}$ into two sets in the following way,
  \begin{eqnarray}
    B=\{i\in I | |\beta d_{i}^{2}|<\epsilon\kappa_{min}\},\nonumber\\
    G =I\backslash B =\{i\in I | |\beta d_{i}^{2}|\geq\epsilon\kappa_{min}\},\nonumber
  \end{eqnarray}
  where $\kappa_{min}$ ($\kappa_{max}$) is the minimum (maximum)
  principal curvature of the boundary.
  For $i\in G$, by $\overline{P}_{i}(x_{0})=0$, we get
  \begin{eqnarray}
    u_{ii}=(1-2K_3d)[\frac{-(A+\sigma N)}{g}+\frac{\beta(u_{l}\nu^l-\phi)}
    {g}]+\frac{u_{l}D_i\nu^l-D_i\phi}{d_{i}}.
  \end{eqnarray}
  Because $|d_{i}^{2}|\geq\frac{\epsilon\kappa_{min}}{\beta}$, by (\ref{4.37})
  and (\ref{4.38}), we have
  \begin{eqnarray}
    |\frac{(1-2K_3d)\beta(u_l\nu^l-\phi)}{g}+\frac{u_{l}D_i\nu^l-D_i\phi}{d_{i}}|\leq\beta C_{2}(\epsilon, |u|_{C^{1}},
    |\partial \Omega|_{C^{2}}, |\psi|_{C^{1}}).\nonumber
  \end{eqnarray}
  Then let $A\geq3\beta C_{2}$, we have
  \begin{eqnarray}
    -\frac{4A}{3}-\sigma N\leq u_{ii}\leq
    -\frac{A}{3}-\frac{1-\epsilon}{1+\epsilon}\sigma N,\label{4.40}
  \end{eqnarray}
  for $\forall i\in G$. We choose $\beta\geq2n\epsilon\kappa_{min}+1$ to let $|d_{i}^{2}|\leq\frac{1}{2n}$ for $i\in B$. Because $|Dd|=1$, there is a $i_{0}\in G$, say $i_{0}=1$, such that
  \begin{eqnarray}
    d_{1}^{2}\geq \frac{1}{n}.\label{4.41}
  \end{eqnarray}

  We have
  \begin{eqnarray}\label{4.42}
    -2\beta \sum_{i\in I}F^{ii}u_{ii}h_{i}\nu^i&=&-2\beta\sum_{i\in G}F^{ii}u_{ii}h_{i}\nu^i-2\beta \sum_{i\in B}F^{ii}u_{ii}h_{i}\nu^i\\
    &\geq& -2(1-\epsilon)\beta F^{11}u_{11}d_{1}^{2}-2\beta\sum_{i\in B,
    u_{ii}>0}F^{ii}u_{ii}d_{i}^{2}\nonumber\\
    &\geq&-\frac{\beta F^{11}u_{11}}{n}-2\epsilon
    \kappa_{min}\sum_{u_{ii}>0}F^{ii}u_{ii}.\nonumber
  \end{eqnarray}
  and
  \begin{eqnarray}\label{4.43}
    2g\sum_{i\in I}F^{ii}u_{ii}D_i\nu^i&=&2g\sum_{u_{ii}> 0}F^{ii}u_{ii}D_i\nu^i+2g\sum_{u_{ii}\leq0}F^{ii}u_{ii}D_i\nu^i\\
    &\geq&2\kappa_{min}\sum_{u_{ii}>0}F^{ii}u_{ii}
    +2\kappa_{max}\sum_{u_{ii}\leq0}F^{ii}u_{ii}.\nonumber
  \end{eqnarray}
%  where $k_1\geq k_2$ a constant depends only on $\partial\Omega|_{C^2}$.
  Plug (\ref{4.42}) and (\ref{4.43}) into (\ref{4.39}) to get
  \begin{eqnarray}\label{4.32}
    F^{ii}\overline{P}_{ij}&\geq&\big((A+\sigma N-\beta C_1)\gamma\kappa_0\big)
    (\mathcal{F}+1)
    -\frac{\beta}{2n}F^{11}u_{11}\nonumber\\
    &&+2(1-\epsilon)\kappa_{min}\sum_{u_{ii}>0}F^{ii}u_{ii}
    +2\kappa_{max}\sum_{u_{ii}\leq0}F^{ii}u_{ii}.
  \end{eqnarray}
%%%%%%%%%%%%%%%%%%%%%%%%%%%%%%%%%%%%%%%%%%%%%%%%%%%%%%%%%%%%%%%%%%%%%%%%%%%%%%%%%%%%%%%%%%%%%%%%%%%%%%%%%%%%%%%%%%%%%%%%%%%%

Denote $u_{22}\geq\cdots\geq u_{nn}$, and
  %$\mu_i=u_{ii}\ (1\leq i\leq n)$ for simplicity. We also denote
  \begin{eqnarray}
  %  \lambda_{1}&=&\max\limits_{1\in\overline{\alpha}}\{w_{\overline{\alpha}\overline{\alpha}}\}
%   =\mu_1+\sum_{i=2}^{n-1}\mu_i,\nonumber\\
    \lambda_{m_1}&=&\min\limits_{1\in\overline{\alpha}}\{w_{\overline{\alpha}\overline{\alpha}}\}=
     u_{11}+\sum_{i=3}^{n}u_{ii},\nonumber
  \end{eqnarray}
   and $\lambda_1\geq\lambda_{2}\geq\cdots\geq\lambda_{n}>0$ the eigenvalues of  the matrix $W$.
   Assume $N>1$, from (\ref{4.1}) we see that
  \begin{eqnarray}\label{4.34}
    u_{ii}\leq 2C_{0}N,\quad \forall i\in I.
  \end{eqnarray}
  Then
  \begin{eqnarray}\label{4.35}
    \lambda_{i}\leq2(n-1)C_{0}N,\quad \forall 1\leq i\leq C_{n}^{m}.
  \end{eqnarray}

\medskip
Since $u_{11}\leq u_{22}$, we see that $\lambda_{m_1}=\lambda_n$.
  Then
  \begin{eqnarray}
    F^{11}>S_{n-1}(\lambda|n)\geq\frac{1}{n(n-1)}\mathcal{F},
  \end{eqnarray}
  it follows that
  \begin{eqnarray}
    F^{ij}\overline{P}_{ij}&\geq&\big((A+\sigma N)\gamma\kappa_0-\beta C_{1}\big)(\mathcal{F}+1)
    -2C_{0}\kappa_{max}N\mathcal{F}\nonumber\\
    &&+\frac{\beta}{2n^2(n-1)}(\frac{A}{3}+\frac{1-\epsilon}{1+\epsilon}\sigma N)\mathcal{F}\nonumber\\
    &>&0.
  \end{eqnarray}
  if we choose $\beta=\frac{12n^2(n-1)\kappa_{max}C_{0}}{\sigma}+2n\epsilon\kappa_{min}+1$ and $A>\frac{\beta C_{1}}{\gamma\kappa_0}$.
%In conclusion, we choose
%\begin{eqnarray}
%  \beta=\max\{2nk_2+1,\frac{6nmk_{1}C_{0}C_{n}^{m}}{\sigma},\frac{6nk_{1}C_{0}C_{n}^{m}}{c_0\sigma},
%\frac{6nmC_0k_1}{\sigma\theta_2}\}.\nonumber
%\end{eqnarray}
% Taking $\mu=\min\{\mu_0,\frac{1}{4K_3},\frac{1}{\beta}\}$ and $A>\max \{3\beta C_{2},\frac{3(\beta C_{1}+k_{1}k\max f)}{k_{3}}\}$,
% we obtain $F^{ii}\overline{P}_{ij}>0$,
This contradicts to that $\overline{P}$ attains its maximum in the
interior of $\Omega_{\mu}$.
This contradiction implies that $\overline{P}$
 attains its maximum on the boundary $\partial \Omega_{\mu}$.
%%%%%%%%%%%%%%%%%%%%%%%%%%%%%%%%%%%%%%%%%%%%%%%%%%%%%%%%%%%%%%%%%%%%%%%%%%%%%%%%%%%%

% As before we assume $N>1$, then from (\ref{4.34}) we see that
%  \begin{eqnarray}\label{4.92}
%    u_{ii}\leq 2C_{0}N,\quad \forall i\in I.
%  \end{eqnarray}
%  Plug (\ref{4.88}) and (\ref{4.92}) into (\ref{4.91}) to get
%  \begin{eqnarray}
%    F_{ij}\overline{P}_{ij}&\geq& \big((A+\sigma N)k_{3}-\beta C_{1}\big)(\mathcal{F}+1)+\frac{\beta}{2n(C_{n}^{m}-k+1)}(\frac{A}{3}+\frac{2\sigma N}{3})\mathcal{F}\nonumber\\
%    &&-4k_{1}C_{0}N\mathcal{F}>0.\nonumber
%  \end{eqnarray}
%  if we choose $\beta=\frac{12
%  nk_{1}C_{0}(C_{n}^{m}-k+1)}{\sigma}$ and $A>\frac{\beta C_{1}}{k_{3}}+1$.
%  This contradicts to that $\overline{P} $ attains its maximum in the interior of $\Omega_{\mu}$.
%   This implies that $P$ attains its minimum on the boundary $\partial \Omega_{\mu}$.

On $\partial \Omega$, it is easy to see
\begin{eqnarray}
  \overline{P}=0.\nonumber
\end{eqnarray}
On $\partial\Omega_{\mu}\cap\Omega$, we have
\begin{eqnarray}
  \overline{P}\leq C_{3}(|u|_{C^{1}}, |\phi|_{C^{0}})-(A+\sigma N)\frac{\mu}{2}\leq0,\nonumber
\end{eqnarray}
if we take $A=\frac{2C_{3}}{\mu}+\frac{\beta C_{1}}{k_{3}}+1$. Finally the maximum principle tells us that
\begin{eqnarray}
  \overline{P}\leq0,\quad\text{in}\quad\Omega_{\mu}.
\end{eqnarray}

Suppose $u_{\nu\nu}(y_{0})=\inf_{\partial\Omega}u_{\nu\nu}$, we have
\begin{eqnarray}
  0&\leq&P_{\nu}(y_{0})\nonumber\\
  &\leq&(u_{\nu\nu}+u_{l}D_i\nu^l\nu^i-D_{\nu}\phi)+(A+\sigma N)h_{\nu}\nonumber\\
  &\leq&u_{\nu\nu}(y_{0})+C(|u|_{C^{1}}, |\partial\Omega|_{C^{2}}, |\phi|_{C^{2}})+(A+\sigma N).
\end{eqnarray}
Then we get
\begin{eqnarray}
  \inf_{\partial\Omega}u_{\nu\nu}\geq -C-\sigma N.
\end{eqnarray}
\end{proof}

%%%%%%%%%%%%%%%%%%%%%%%%%%%%%%%%%%%%%%%%%%%%%%%%%%%%%%%%%%%%%%%%%%%%%%%%%%%%%%%%%%%%%%%%%
Then we prove Theorem \ref{th5.1} immediately.
\begin{proof}[\bf Proof of Theorem \ref{th5.1}]
 We choose $\sigma=\frac{1}{2}$ in Lemma \ref{le4.3} and \ref{le4.4}, then
\begin{eqnarray}
  \sup_{\partial\Omega}|u_{\nu\nu}|\leq C.\label{5.10}
\end{eqnarray}
Combining (\ref{5.10}) with (\ref{4.1}) in Lemma \ref{le4.1}, we obtain
\begin{eqnarray}
  \sup_{\overline{\Omega}}|D^2u|\leq C.
\end{eqnarray}
\end{proof}

\section{Existence of the Neumann boundary problem}\label{sec5}
We use the  method of continuity to prove the existence theorem for the Neumann problem (\ref{eq}).
\begin{proof}[\textbf{Proof of Theorem \ref{th1.1}}]
Consider a family of equations with parameter $t$,
\begin{equation}
\left\{
\begin{aligned}\label{eq6}
 &S_{k}(W)=tf+(1-t)\frac{(C_n^m)!m^k}{(C_n^m-k)!k!},\quad \text{in}\ \Omega,\\
 &u_{\nu}=-u+t\phi+(1-t)(x\cdot\nu+\frac{1}{2}x^2),\quad \text{on}\ \partial\Omega.
\end{aligned}
\right.
\end{equation}
From Theorem \ref{th3.1} and \ref{th5.1}, we get a glabal $C^2$ estimate independent of $t$
for the equation (\ref{eq6}).
 It follows that the equation (\ref{eq6}) is uniformly elliptic.
 Due to the concavity of $S_k^{\frac{1}{k}}(W)$ with respect to $D^2u$ (see \cite{cns2}),
 we can get the global H\"older estimates of second derivatives following the discussions
in \cite{lt}, that is, we can get
\begin{eqnarray}
  |u|_{C^{2,\alpha}}\leq C,
\end{eqnarray}
where $C$ depends only on $n$, $m$, $k$, $|u|_{C^{1}}$,$|f|_{C^{2}}$,$\min f$, $|\phi|_{C^{3}}$ and $\Omega$.
 It is easy to see that $\frac{1}{2}x^2$ is a $k$-admissible solution to (\ref{eq6}) for $t=0$.
 Applying the method of continuity (see \cite{gt}, Theorem 17.28), the existence of the classical
solution holds for $t=1$. By the standard regularity theory of uniformly elliptic partial differential
equations, we can obtain the higher regularity.
\end{proof}

% that's all folks
\end{document}